\pgfplotsset{compat=1.15}
\newcommand{\setword}[2]{%
	\phantomsection
	#1\def\@currentlabel{\unexpanded{#1}}\label{#2}%
}
\definecolor{uuuuuu}{rgb}{0.26666666666666666,0.26666666666666666,0.26666666666666666}
\definecolor{xdxdff}{rgb}{0.49019607843137253,0.49019607843137253,1.}
\definecolor{ffqqqq}{rgb}{1.,0.,0.}
\definecolor{ffqqqq}{rgb}{1.,0.,0.}
\definecolor{ffxfqq}{rgb}{1.,0.4980392156862745,0.}
\definecolor{uuuuuu}{rgb}{0.26666666666666666,0.26666666666666666,0.26666666666666666}
\definecolor{qqwuqq}{rgb}{0.,0.39215686274509803,0.}
\definecolor{zzttqq}{rgb}{0.6,0.2,0.}
\definecolor{xdxdff}{rgb}{0.49019607843137253,0.49019607843137253,1.}
\definecolor{qqqqff}{rgb}{0.,0.,1.}
\definecolor{cqcqcq}{rgb}{0.7529411764705882,0.7529411764705882,0.7529411764705882}
\definecolor{sqsqsq}{rgb}{0.12549019607843137,0.12549019607843137,0.12549019607843137}
\definecolor{uuuuuu}{rgb}{0.26666666666666666,0.26666666666666666,0.26666666666666666}
\definecolor{ffqqqq}{rgb}{1,0,0}
\definecolor{xdxdff}{rgb}{0.49019607843137253,0.49019607843137253,1}
\definecolor{yqqqyq}{rgb}{0.5019607843137255,0,0.5019607843137255}
\definecolor{qqqqff}{rgb}{0,0,1}
\definecolor{ffqqqq}{rgb}{1,0,0}
\definecolor{ffqqff}{rgb}{1,0,1}
\theoremstyle{plain}
\newtheorem{theorem}[subsection]{Theorem}
\newtheorem{lemma}[subsection]{Lemma}
\newtheorem{prop}[subsection]{Proposition}
\theoremstyle{definition}
\newtheorem{exam}[subsection]{Example}
\newtheorem{remark}[subsection]{Remark}
\newtheorem{open}[subsection]{Open}
\newcommand{\sci}{\subset}
\newcommand{\set}[1]{\{#1\}}
\newcommand{\ga}{\alpha}
\newcommand{\gb}{\beta}
\newcommand{\tbf}{\textbf}
\newcommand{\tit}{\textit}
\newcommand{\C}[1]{\mathcal{#1}}
\newcommand{\D}[1]{\mathbb{#1}}
\newcommand{\te}{\text}
\newcommand{\nd}{\noindent}
\begin{document}
\nd To appear, Contemporary Mathematics of American Mathematical Society
\title{Optimal quantization for nonuniform discrete distributions}

\address{School of Mathematical and Statistical Sciences\\
University of Texas Rio Grande Valley\\
1201 West University Drive\\
Edinburg, TX 78539-2999, USA.}

\email{\{$^1$rljcabasag07, $^2$samir.huq01, $^3$ericmendoza128\}@gmail.com}
\email{$^4$mrinal.roychowdhury@utrgv.edu}

\author{$^1$Russel Cabasag}
 \author{$^2$Samir Huq}
 \author{$^3$Eric Mendoza}
  \author{$^4$Mrinal Kanti Roychowdhury}


\subjclass[2010]{60E05, 94A34.}
\keywords{Discrete distribution, optimal sets of $n$-means, quantization error}

\date{}
\maketitle

\pagestyle{myheadings}\markboth{R. Cabasag, S. Huq, E. Mendoza, M.K. Roychowdhury}
{Optimal quantization for nonuniform discrete distributions}

\begin{abstract}
This paper explores the process of optimal quantization for several types of discrete probability distributions. Quantization is a technique used to approximate a complex distribution with a smaller set of representative points, which is important in fields such as data compression and signal processing. We begin by examining two specific nonuniform distributions over a finite set of values and identify the best representative points for different levels of approximation. We then extend our analysis to two infinite discrete distributions: one supported on the reciprocals of natural numbers and another on the natural numbers themselves. For these distributions, we compute the optimal sets of representatives and assess how well they approximate the original distributions. Finally, we address the reverse problem—determining the underlying distribution when the optimal sets are known. Our results provide both theoretical insights and computational techniques useful in information theory and data analysis.
\end{abstract}

\section{Introduction}

Quantization is the process of converting a continuous analog signal into a digital signal of $k$ discrete levels, or converting a digital signal of $n$ levels into another digital signal of $k$ levels, where $k < n$.
It is essential when analog quantities are represented, processed, stored, or transmitted by a digital system, or when data compression is required. It is a classic and still very active research topic in source coding and information theory. It has broad applications in engineering and technology. In fact, it is useful in data compression and cluster analysis. For some details one can see  \cite{DFG, GG, GL1, GN,  P, Z1, Z2}. Recently, the quantization theory has been extended to constrained quantization and conditional quantization (see \cite{PR1,PR2, PR3}). Following the introduction of constrained quantization, the theory of quantization is now categorized into two main branches: constrained quantization and unconstrained quantization.
To gain a quick understanding and practical grasp of research in constrained and unconstrained quantization, the paper \cite{PR1} serves as a helpful starting point. For a more in-depth treatment of unconstrained quantization, one may refer to \cite{GL2}.

Let $\D R^d$ denote the $d$-dimensional Euclidean space equipped with a metric $\|\cdot\|$ compatible with the Euclidean topology. Let $P$ be a Borel probability measure on $\D R^d$ and $\ga$ be a locally finite subset of $\D R^d$, i.e., intersection of $\ga$ with any bounded subset of $\D R^d$ is finite. This implies that $\ga$ is countable and closed. 
Then, $\int \min_{a \in \ga} \|x-a\|^2 dP(x)$ is often referred to as the \tit{cost,} or \tit{distortion error} for $\ga$ with respect to the probability measure $P$, and is denoted by $V(P; \ga)$.  Write
$\C D_n:=\set{\ga \sci \D R^d : 1\leq \te{card}(\ga)\leq n}$.  Then, $\inf\set{V(P; \ga) : \ga \in \C D_n}$ is called the \tit{$n$th quantization error} for the probability measure $P$, and is denoted by $V_n:=V_n(P)$. A set $\ga$ for which the infimum occurs and contains no more than $n$ elements is called an \tit{optimal set of $n$-points}. It is known that for a Borel probability measure $P$,  if its support contains at least $n$ elements and $\int \| x\|^2 dP(x)$ is finite, then an optimal set of $n$-points always has exactly $n$-elements (see \cite{ GL1, GL2, GKL, AW}). 
Recently, optimal quantization for different uniform distributions have been investigated by several authors, for example, see \cite{DR, PRRSS, R, RR, RS}.

Given a finite set $\ga\sci \D R^d$, the Voronoi region generated by $a\in \ga$ is defined by
\[M(a|\ga)=\set{x \in \D R^d : \|x-a\|=\min_{b \in \ga}\|x-b\|},\]
i.e., the Voronoi region generated by $a\in \ga$ is the set of all elements in $\D R^d$ which are nearest to $a$, and the set $\set{M(a|\ga) : a \in \ga}$ is called the \tit{Voronoi diagram} or \tit{Voronoi tessellation} of $\D R^d$ with respect to $\ga$.

Let us now state the following proposition (see \cite{ GG, GL2}).
\begin{prop} \label{prop0}
Let $\ga$ be an optimal set of $n$-points for $P$, and $a\in \ga$. Then,

$(i)$ $P(M(a|\ga))>0$, $(ii)$ $ P(\partial M(a|\ga))=0$, $(iii)$ $a=E(X : X \in M(a|\ga))$,
where $M(a|\ga)$ is the Voronoi region of $a\in \ga, $ i.e.,  $M(a|\ga)$ is the set of all elements $x$ in $\D R^d$ which are closest to $a$ among all the elements in $\ga$, and $\partial M(a|\ga)$ represents the boundary of the Voronoi region $M(a|\ga)$.
\end{prop}
By the above proposition, we see that in unconstrained quantization, the elements in an optimal set of $n$-points are the conditional expectations in their own Voronoi regions. Because of this fact, in unconstrained quantization, an optimal set of $n$-points is termed an optimal set of 
$n$-means.
 
\subsection{Delineation} In this paper, we investigate the optimal quantization for finite, and infinite discrete distributions. Section~\ref{sec0} contains the basic preliminaries. In Section~\ref{sec1}, we calculate the optimal sets of $n$-means and the $n$th quantization errors for all $1\leq n\leq 6$ for two nonuniform discrete distributions with support $\set{1, 2, 3, 4, 5, 6}$ associated with two different probability vectors. In Section~\ref{sec2} first, for a probability distribution $P$ with support $\set{\frac 1n : n\in \D N}$ associated with a mass function $f$, given by $f(x)=\frac 1 {2^k}$ if $x=\frac 1 k$ for $k\in \D N$, and zero otherwise, we determine the optimal sets of $n$-means and the $n$th quantization errors for all positive integers up to $n=300$. Then, for a probability distribution $P$ with support the set $\D N$ of natural number associated with a mass function $f$, given by $f(x)=\frac 1 {2^k}$ if $x=k$ for $k\in \D N$, and zero otherwise, we determine the optimal sets of $n$-means and the $n$th quantization errors for all positive integers $n$. In Section~\ref{sec3}, we discuss for a discrete distribution, if the optimal sets are given, how to obtain the probability distributions.

\section{Basic Preliminaries} \label{sec0} 
Let $P$ be a Borel probability measure on $\D R^d$, and $U$ be the largest open subset of $\D R^d$ for which $P(U)=0$. Then, $\D R^d \setminus U$ is called the support of $P$, and is denoted by supp$(P)$. Let $P$ be a uniform distribution defined on the set $\set{1, 2, 3, 4, 5, 6}$. Then, the random variable $X$ associated with the probability distribution is a discrete random variable with probability mass function $f$ given by
 \[f(x)=P(X=x)=\frac 16, \te{ where } x\in \set{1, 2, 3, 4, 5, 6}.\]
It is not difficult to show that if $\ga_n$ is an optimal set of $n$-means for $P$, then
\begin{align*}
\ga_1=\set{3.5}, \  \ga_2& =\set{2, 5}, \ \ga_3=\set{1.5, 3.5, 5.5}, \  \ga_4=\set{1.5, 3.5, 5, 6}, \\
 \ga_5&=\set{1.5, 3, 4, 5, 6}, \te{ and } \ga_6=\te{supp}(P).
\end{align*}
\begin{remark}
Optimal sets are not unique. For example, in the above, the set $\ga_5$ can be any one of the following sets:
\[\set{1.5, 3, 4, 5, 6}, \, \set{1, 2.5, 4, 5, 6}, \, \set{1, 2, 3.5, 5, 6}, \, \set{1, 2, 3, 4.5, 6}, \, \set{1, 2, 3, 4, 5.5}.\]
\end{remark}
In the following sections we give our main results.

\section{Optimal quantization for nonuniform discrete distributions} \label{sec1}

Let $P$ be a nonuniform distribution defined on the set $\set{1, 2, 3, 4, 5, 6}$ associated with a probability vector of the form $(x, (1 - x) x, (1 - x)^2 x, (1 - x)^3 x, (1 - x)^4 x, (1 - x)^5)$, i.e., the probability mass function $f$ is given by
\[f(j)=P(X=j)=\left\{\begin{array} {cc} 
 \vspace{0.05 in} 
x \te{ if } j=1,\\  
 \vspace{0.05 in} 
(1-x)^{j-1}x \te{ if } j\in \set{ 2, 3, 4, 5}, \\  
 \vspace{0.05 in} 
(1-x)^{5} \te { if } j=6,\\
0 \te{ otherwise, }
\end{array}
\right.
 \]
 where $0<x<1$.  Let $X$ be a random variable with probability distribution $P$. Then,
\begin{align*}
E(X)& =1 x+\sum _{j=2}^5 j x (1-x)^{j-1}+6 (1-x)^5=-x^5+6 x^4-15 x^3+20 x^2-15 x+6,
\end{align*}
and so, the optimal set of one-mean is the set $\set{-x^5+6 x^4-15 x^3+20 x^2-15 x+6}$ with quantization error the variance $V$ of the random variable $X$, where
\begin{align*}
V=V_1&=E\|X-E(X)\|^2=\sum_{j=1}^6 f(j)\Big(j-E(X)\Big)^2\\
&= -x (-55 + 275 x - 627 x^2 + 858 x^3 - 781 x^4 + 495 x^5 - 220 x^6 +
   66 x^7 - 12 x^8 + x^9).
   \end{align*}
 Notice that supp$(P)=\set{1, 2, 3, 4, 5, 6}$, and it forms the optimal set of six-means for the probability distribution $P$ for any $0<x<1$.
 For $i, j\in \set{1, 2, \cdots, 6}$ with $i\leq j$, write \[a[i, j]:=E(X : X \in \set{i, i+1, \cdots, j})=\frac {\sum_{x=i}^j xf(x)}{\sum_{x=i}^j  f(x)}.\]
 In the following two subsections, we determine the optimal sets of $n$-means and the $n$th quantization errors for all $2\leq n\leq 5$ for the probability distribution $P$: one for $x=\frac 12$, and one for $x=\frac 7{10}$.

\subsection{Optimal quantization for the probability distribution $P$ with $x=\frac 12$.}
In this case, the probability mass function $f$ is given by
\[f(j)=P(X=j)=\left\{\begin{array} {ll}\vspace{0.05 in}
\frac 1{2^j} \te{ if } j\in \set{1, 2, 3, 4, 5}, \\
 \vspace{0.05in} 
\frac 1 {2^5} \te { if } j=6,\\
0 \te{ otherwise. }
\end{array}
\right.
 \]
 We now give the following propositions.
\begin{prop} \label{prop12}
The optimal set of two-means is given by $\set{a[1,2], a[3, 6]}$ with quantization error $V_2=\frac{341}{768}$.
\end{prop}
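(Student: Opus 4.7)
The plan is to reduce the search to a finite enumeration. By Proposition~\ref{prop0}, any optimal set of two-means $\set{a_1, a_2}$ with $a_1<a_2$ partitions $\te{supp}(P)=\set{1, 2, 3, 4, 5, 6}$ through its Voronoi diagram into two nonempty subsets. Since the Voronoi boundary in $\D R$ is the single point $\frac 12(a_1+a_2)$, this partition must be of the form $\set{1,\ldots,k}$ and $\set{k+1,\ldots,6}$ for some $k\in\set{1, 2, 3, 4, 5}$; and by Proposition~\ref{prop0}(iii) each center equals the conditional mean on its Voronoi region, so $a_1=a[1,k]$ and $a_2=a[k+1,6]$. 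This leaves five candidates to test.

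For each $k$, I would compute $a[1,k]$ and $a[k+1,6]$ directly from the mass function $f$, check the Voronoi consistency $k<\frac 12(a[1,k]+a[k+1,6])<k+1$ (otherwise the candidate is incompatible with Proposition~\ref{prop0}(iii) and is discarded), and evaluate the resulting distortion
\[
V_k=\sum_{j=1}^{k}\bigl(j-a[1,k]\bigr)^2 f(j)+\sum_{j=k+1}^{6}\bigl(j-a[k+1,6]\bigr)^2 f(j).
\]
The quantity $V_2$ is then the minimum of the $V_k$ over the feasible $k$. A preliminary check shows that $k=4$ and $k=5$ fail the Voronoi test, since the mass concentration near $1$ pulls the midpoint of the two centers well below the intended threshold; so in fact only $k=1, 2, 3$ need be compared.

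Specializing to $k=2$, one finds $a[1,2]=\frac 43$ and $a[3,6]=\frac{31}8$, with midpoint $\frac{125}{48}\in(2, 3)$, so the candidate is feasible. The left block contributes $(1-\tfrac 43)^2\cdot\tfrac 12+(2-\tfrac 43)^2\cdot\tfrac 14=\tfrac 16$, and the four squared deviations on $\set{3, 4, 5, 6}$ sum to $\tfrac{71}{256}$, giving $V_2=\tfrac 16+\tfrac{71}{256}=\tfrac{341}{768}$. The analogous computations for $k=1$ and $k=3$ produce $\tfrac{367}{512}$ and $\tfrac{493}{896}$, both strictly larger than $\tfrac{341}{768}$, and optimality follows.

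The only real obstacle is bookkeeping: the conditional means $a[1,k]$ and $a[k+1,6]$ carry odd denominators such as $3, 7, 15, 31$, so the three feasible candidate distortions must be compared as exact rationals rather than decimal approximations to guarantee that $k=2$ is strictly optimal. Once the fractions are handled carefully, no essential difficulty remains.
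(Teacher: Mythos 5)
Your proposal is correct, and the arithmetic checks out: $a[1,2]=\tfrac 43$, $a[3,6]=\tfrac{31}8$, the midpoint $\tfrac{125}{48}\in(2,3)$, the two block contributions are $\tfrac 16$ and $\tfrac{71}{256}$, and $\tfrac 16+\tfrac{71}{256}=\tfrac{341}{768}$; the competing feasible distortions $\tfrac{367}{512}$ for $k=1$ and $\tfrac{13}{28}+\tfrac{11}{128}=\tfrac{493}{896}$ for $k=3$ are indeed both larger, and $k=4,5$ do fail the Voronoi-consistency check since $\tfrac 12\bigl(a[1,4]+a[5,6]\bigr)=\tfrac{217}{60}\approx 3.62<4$ and $\tfrac 12\bigl(a[1,5]+6\bigr)=\tfrac{243}{62}\approx 3.92<5$.

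Your route differs from the paper's. The paper first records the upper bound $V_2\le\tfrac{341}{768}$ coming from the candidate $\set{\tfrac 43,\tfrac{31}8}$, and then eliminates the other configurations by contradiction against that bound: if the left Voronoi cell contains $\set{1,2,3}$ or more, the left-block contribution alone is at least $\sum_{j=1}^3 f(j)\bigl(j-a[1,3]\bigr)^2=\tfrac{13}{28}>\tfrac{341}{768}$ (this disposes of $k\ge 3$ in one stroke), and the case $k=1$ is dispatched by computing $\tfrac{367}{512}$. You instead reduce to a complete enumeration over the split points $k\in\set{1,\dots,5}$, prune by the Voronoi-consistency test, and compare exact distortions. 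Your approach is more systematic and self-checking — it produces the exact value for every feasible competitor and requires no clever choice of lower bound — at the cost of doing slightly more arithmetic (the paper never needs $\tfrac{493}{896}$, only the cruder bound $\tfrac{13}{28}$). The paper's bounding trick scales a bit better when the number of cases to eliminate grows, while your enumeration is the cleaner template for mechanizing the later propositions. Both arguments correctly rely on Proposition~\ref{prop0}: part (i) to get two nonempty cells, part (ii) to keep the boundary off the atoms, and part (iii) to force each center to be the conditional mean, hence of the form $a[1,k]$ and $a[k+1,6]$.
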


\begin{proof} Notice that $a[1,2]=\frac 43$, and $a[3,6]=\frac {31} 8$.
Let us consider the set $\gb:=\set{\frac 43, \frac {31}8}$. Since $2<\frac 12(\frac 43+\frac {31}8)=2.60417<3$,
the distortion error due to the set $\gb$ is given by
\begin{align*} \sum_{j=1}^6 f(j)\min_{a\in \gb} (j-a)^2=\sum _{j=1}^2 f(j)\Big(j-\frac{4}{3}\Big)^2+\sum _{j=3}^6f(j)\Big(j-\frac{31}{8}\Big)^2=\frac{341}{768}.
\end{align*}
Since $V_2$ is the quantization error for two-means, we have $V_2\leq \frac{341}{768}=0.44401$.
Let $\ga:=\set{a_1, a_2}$ be an optimal set of two-means. Without any loss of generality, we can assume that $1\leq a_1<a_2\leq 6$. Notice that the Voronoi region of $a_1$ must contain $1$. Suppose that the Voronoi region of $a_1$ contains the set $\set{1,2,3}$. Then, as $a[1,3]=\frac{11}{7}$, we have
\[V_2\geq \sum _{j=1}^3f(j)\Big(j-\frac {11}{7}\Big)^2=\frac{13}{28}=0.464286>V_2,\]
which gives a contradiction. Hence, we can assume that the Voronoi region of $a_1$ does not contain $\set{1,2,3}$. Next, suppose that the Voronoi region of $a_1$ contains only the element $1$. Then, the Voronoi region of $a_2$ contains all the remaining elements, and so
\[a_2=a[2, 6]=\frac{47}{16}\]
implying
\[V_2=\sum _{j=2}^6f(j)\Big(j-\frac{47}{16}\Big)^2=\frac{367}{512}=0.716797>V_2,\]
which yields a contradiction.
Hence, we can assume that the Voronoi region of $a_1$ contains only the elements $1$ and $2$, and the remaining elements are contained in the Voronoi region of $a_2$ implying
\[a_1=a[1,2]=\frac 43, \te{ and } a_2=a[3,6]=\frac{31}{8}\]
with quantization error $V_2=\frac{341}{768}$. Thus, the proof of the proposition is complete.
\end{proof}
\begin{prop} \label{prop13}
The optimal set of three-means is given by $\set{1, a[2,3], a[4,6]}$ with quantization error $V_3=\frac{65}{384}$.
\end{prop}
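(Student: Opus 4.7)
The plan is to mirror the two-step method of Proposition~\ref{prop12}: first I would exhibit the candidate three-means set and compute its distortion to get an upper bound on $V_3$, and then use Proposition~\ref{prop0} together with a finite case analysis to rule out every other placement of centers.

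For the upper bound, I would compute $a[2,3]=\frac{7}{3}$ and $a[4,6]=\frac{19}{4}$, verify that the midpoints $\frac{1}{2}(1+\frac{7}{3})=\frac{5}{3}$ and $\frac{1}{2}(\frac{7}{3}+\frac{19}{4})=\frac{85}{24}$ lie in $(1,2)$ and $(3,4)$ respectively, so that the Voronoi regions of the three candidate centers capture exactly $\{1\}$, $\{2,3\}$ and $\{4,5,6\}$. Summing $f(j)\min_a(j-a)^2$ over $j=1,\dots,6$ then yields $\frac{65}{384}$, hence $V_3\le\frac{65}{384}$.

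For optimality, let $\alpha=\{a_1<a_2<a_3\}$ be an optimal set. Proposition~\ref{prop0} forces the Voronoi regions to split $\{1,\dots,6\}$ into three nonempty consecutive blocks, with each $a_i$ equal to the conditional mean on its block, so $\alpha$ is completely determined by the choice of partition. Of the $\binom{5}{2}=10$ candidate partitions, nine must be eliminated. Any partition whose first block contains $3$ (namely first blocks $\{1,2,3\}$ or $\{1,2,3,4\}$) is killed immediately by the estimate $\sum_{j=1}^{3}f(j)(j-a[1,3])^2=\frac{13}{28}>\frac{65}{384}$, borrowed verbatim from Proposition~\ref{prop12}; likewise the partition $\{1\},\{2\},\{3,4,5,6\}$ is rejected by its last-block contribution $\sum_{j=3}^{6}f(j)(j-a[3,6])^2=\frac{71}{256}>\frac{65}{384}$. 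For each of the remaining six partitions, I would compute the full three-block distortion using the corresponding conditional means $a[i,j]$ and compare with $\frac{65}{384}$; each total turns out strictly larger, leaving only the claimed partition and forcing $a_1=1$, $a_2=\frac{7}{3}$, $a_3=\frac{19}{4}$.

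The main obstacle is bookkeeping rather than concept: with only six atoms, no structural argument is needed, but a half-dozen distortion sums with distinct denominators must be evaluated and compared against $\frac{65}{384}$ without arithmetic slips. In practice I would first tabulate the conditional means $a[i,j]$ for all relevant intervals $[i,j]\subset\{1,\dots,6\}$ to avoid recomputation, and organize the comparison in a small table.
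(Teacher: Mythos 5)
Your plan is correct and takes essentially the same approach as the paper: establish the upper bound $V_3\le\frac{65}{384}$ from the candidate set, reduce via Proposition~\ref{prop0} to a finite comparison over consecutive-block partitions of $\{1,\dots,6\}$, and rule out the alternatives by distortion estimates (the paper organizes this as a nested case tree using partial-block lower bounds to kill several partitions at once, while you enumerate all $\binom{5}{2}=10$ partitions and compare full distortions, but the substance is identical). One minor bookkeeping correction: after the quick eliminations six partitions remain, one of which is the claimed optimizer, so only five (not all six) of the remaining full distortions need to be shown to exceed $\frac{65}{384}$.
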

\begin{proof}
Notice that $a[2,3]=\frac 73$, and $a[4,6]=\frac{19} 4$.
The distortion error due to the set $\gb:=\set{1, \frac 73, \frac {19} 4}$ is given by
\begin{align*}\sum_{j=1}^6 f(j)\min_{a\in \gb} (j-a)^2=\sum _{j=2}^3 f(j)\Big(j-\frac{7}{3}\Big)^2+\sum _{j=4}^6f(j)\Big(j-\frac{19}{4}\Big)^2=\frac{65}{384}.
\end{align*}
Since $V_3$ is the quantization error for three-means, we have $V_3\leq \frac{65}{384}=0.169271$. Let $\ga:=\set{a_1, a_2, a_3}$ be an optimal set of three-means such that $1\leq a_1<a_2<a_3\leq 6$. Notice that the Voronoi region of $a_1$ must contain $1$. Suppose that the Voronoi region of $a_1$ also contains $3$. Then,
\[V_3\geq \sum_{j=1}^3 f(j)(j-a[1,3])^2 =\frac{13}{28}>V_3,\]
which yields a contradiction. Thus, we can assume that the Voronoi region of $a_1$ does not contain $3$. Suppose that the Voronoi region of $a_1$ contains only the two elements $1$ and $2$. Then, the Voronoi region of $a_2$ must contain $3$. The following two cases can arise:

\tit{Case~1. The Voronoi region of $a_2$ does not contain 4.}

Then, we must have $a_2=3$, and $a_4=a[4, 6]$ yielding
\[V_3\geq \sum_{j=1}^2 f(j) (j-a[1,2])^2 +\sum_{j=4}^6 f(j)(j-a[4,6])^2 =\frac{97}{384}=0.252604>V_3,\]
which is a contradiction.

\tit{Case~2. The Voronoi region of $a_2$ contains 4.}

Then, \[V_3\geq  \sum_{j=1}^2 f(j) (j-a[1,2])^2 +\sum_{j=3}^4 f(j)(j-a[3,4])^2 =\frac 5 {24} =0.208333>V_3,\]
which leads to a contradiction.

Hence, by Case~1 and Case~2, we can assume that the Voronoi region of $a_1$ contains only the element $1$, i.e., $a_1=1$. Then, the Voronoi region of $a_2$ must contain $2$. Suppose that the Voronoi region of $a_2$ also contains the set $\set{2,3,4}$. Then,
\[V_3\geq \sum_{j=2}^4 f(j)(j-a[2,4])^2 =\frac{13}{56}=0.232143>V_3,\]
which yields a contradiction. Thus, we can assume that the Voronoi region of $a_2$ does not contain $4$. Suppose that the Voronoi region of $a_2$ contains only the element $2$. Then, the Voronoi region of $a_3$ must contain the remaining elements, which yields
\[V_3\geq  \sum_{j=3}^6 f(j)(j-a[3,6])^2 =\frac{71}{256}=0.277344>V_3,\]
which is a contradiction. Hence, we can assume that the Voronoi region of $a_2$ contains only the two elements $2$ and $3$ implying the fact that the Voronoi region of $a_3$ contains the elements $4$, $5$, and $6$. Thus, we have
\[a_1=1, \, a_2=a[2,3]=\frac 73, \te{ and } a_3=a[4,6]=\frac {19}4,\]
with quantization error $V_3=\frac{65}{384}$, which yields the proposition.
\end{proof}

\begin{prop} \label{prop13}
The optimal set of four-means is  $\set{1, 2, a[3,4], a[5,6]}$ with quantization error $V_4=\frac{11}{192}$.
\end{prop}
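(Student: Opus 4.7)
The plan follows the strategy of the two preceding propositions: first exhibit an upper bound by evaluating the distortion of the candidate set, then successively rule out the other possible Voronoi configurations of an arbitrary optimal set $\ga=\{a_1,a_2,a_3,a_4\}$ with $a_1<a_2<a_3<a_4$.

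First, I compute $a[3,4]=\tfrac{10}{3}$ and $a[5,6]=\tfrac{11}{2}$, check that the Voronoi diagram of $\{1,2,\tfrac{10}{3},\tfrac{11}{2}\}$ induces the partition $\{1\},\{2\},\{3,4\},\{5,6\}$ of the support, and sum the four per-region errors to obtain the upper bound $V_4\le\tfrac{11}{192}$.

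Next, since Voronoi regions on the line are intervals and by Proposition~\ref{prop0}(i) all four are nonempty, the support splits into four consecutive nonempty blocks. The pivotal estimate is that for \emph{every} real $a_1$,
\[
\sum_{j=1}^{2}f(j)(j-a_1)^{2}\;\ge\;\min_{m\in\D R}\sum_{j=1}^{2}f(j)(j-m)^{2}\;=\;\tfrac{1}{6}\;=\;\tfrac{32}{192}\;>\;\tfrac{11}{192},
\]
so the Voronoi region of $a_1$ cannot contain the point $2$. Proposition~\ref{prop0}(iii) then yields $a_1=1$. The identical argument applied to the block $\{2,3\}$ gives the lower bound $\tfrac{1}{12}=\tfrac{16}{192}>\tfrac{11}{192}$, which forces $a_2=2$.

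Finally, the atoms $\{3,4,5,6\}$ must split into exactly two consecutive nonempty blocks, leaving three candidate partitions $(\{3\},\{4,5,6\})$, $(\{3,4\},\{5,6\})$, and $(\{3,4,5\},\{6\})$ for the regions of $a_3$ and $a_4$. Using the conditional expectations $a[4,6]=\tfrac{19}{4}$, $a[3,4]=\tfrac{10}{3}$, $a[5,6]=\tfrac{11}{2}$, and $a[3,4,5]=\tfrac{25}{7}$, the first configuration gives distortion $\tfrac{11}{128}$, the third is Voronoi-inconsistent because $|5-\tfrac{25}{7}|=\tfrac{10}{7}>1=|5-6|$ contradicting Proposition~\ref{prop0}(iii), and the middle one achieves exactly $\tfrac{11}{192}$, which completes the proof. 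The only real subtlety is in the elimination step: the lower bound on the partial distortion over $\{1,2\}$ must be stated uniformly in the unknown $a_1$, rather than specialized at $a_1=a[1,2]$, because the Voronoi region of $a_1$ might a priori contain atoms beyond $\{1,2\}$.
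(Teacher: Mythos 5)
Your proof is correct and follows essentially the same strategy as the paper: establish the upper bound $V_4\leq\frac{11}{192}$ from the candidate set, then eliminate the alternative Voronoi partitions one by one. The only small deviation is in the last elimination: you rule out the partition $(\{3,4,5\},\{6\})$ by the Voronoi-inconsistency $\bigl|5-\tfrac{25}{7}\bigr|=\tfrac{10}{7}>1=|5-6|$, whereas the paper uses the distortion lower bound $\sum_{j=3}^{5} f(j)\bigl(j-a[3,5]\bigr)^2=\tfrac{13}{112}>V_4$; both are valid, and your explicit remark that the bound over $\{1,2\}$ must be stated uniformly in the unknown $a_1$ correctly spells out the step the paper leaves implicit when it writes $\sum_{j=1}^{2}f(j)(j-a[1,2])^2$.
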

\begin{proof}
The distortion error due to the set $\gb:=\set{1, 2, a[3,4], a[5,6]}$ is given by
\begin{align*} \sum_{j=1}^6 f(j)\min_{a\in \gb} (j-a)^2=\sum _{j=3}^4f(j)(j-a[3,4])^2+\sum _{j=5}^6f(j)(j-a[5,6])^2 =\frac{11}{192}.
\end{align*}
Since $V_4$ is the quantization error for four-means, we have $V_4\leq\frac{11}{192}=0.0572917$.
Let $\ga:=\set{a_1, a_2, a_3, a_4}$ be an optimal set of four-means. Without any loss of generality, we can assume that $1\leq a_1<a_2<a_3<a_4\leq 6$. The Voronoi region of $a_1$ must contain $1$. Suppose that the Voronoi region of $a_1$ contains $2$ as well. Then,
\[V_4\geq \sum _{j=1}^2f(j)(j-a[1,2])^2=\frac 1 6>V_4,\]
which gives a contradiction. Hence, we can assume that the Voronoi region of $a_1$ contains only the element $1$, i.e., $a_1=1$. Then, the Voronoi region of $a_2$ must contain
$2$. Suppose that the Voronoi region of $a_2$ also contains $3$. Then,
\[V_4\geq \sum _{j=2}^3f(j)(j-a[2,3])^2=\frac 1{12}=0.0833333>V_4,\]
which leads to a contradiction. Hence, the Voronoi region of $a_2$ does not contain $3$, i.e., $a_2=2$. Then, the Voronoi region of $a_3$ must contain $3$. Suppose that the Voronoi region of $a_3$ contains the set $\set{3,4,5}$. Then, we have
\[V_4\geq \sum _{j=3}^5f(j)(j-a[3,5])^2=\frac{13}{112}=0.116071>V_4,\]
which yields a contradiction. Thus, we can assume that the Voronoi region of $a_3$ does not contain $5$. Suppose that the Voronoi region of $a_3$ contains $3$ only. Then, the Voronoi region of $a_5$ contains $4, 5, 6$, which implies
\[V_4=\sum_{j=4}^6 f(j)(j-a[4, 6])^2 =\frac{11}{128}=0.0859375>V_4,\]
which gives a contradiction. Hence, the Voronoi region of $a_3$ contains $3$ and $4$ yielding $a_3=a[3, 4]$, and $a_4=a[5,6]$. Thus, the optimal set of four-means is  $\set{1, 2, a[3,4], a[5,6]}$ with quantization error $V_4=\frac{11}{192}$,  which is the proposition.
\end{proof}

Using the similar technique as the previous proposition, the following proposition can be proved.
\begin{prop} \label{prop14}
The optimal set of five-means is  $\set{1, 2,3,4, a[5,6]}$ with quantization error $V_5=\frac{1}{64}$.
\end{prop}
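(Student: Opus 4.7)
The plan is to adapt the argument template of Propositions preceding this one: first establish the upper bound $V_5\le\frac{1}{64}$ by direct computation of the distortion of the proposed set, and then show that any optimal $5$-set $\ga=\{a_1,a_2,a_3,a_4,a_5\}$ with $1\le a_1<a_2<a_3<a_4<a_5\le 6$ must coincide with it. Since $a[5,6]=\tfrac{11}{2}$, the candidate distortion is $\tfrac1{32}(5-\tfrac{11}{2})^2+\tfrac1{32}(6-\tfrac{11}{2})^2=\tfrac1{64}$, giving the upper bound.

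For the lower-bound side, I note that since $\operatorname{supp}(P)=\{1,2,3,4,5,6\}$ has exactly six elements and the $a_i$ are strictly ordered, the Voronoi partition groups these six integers into five consecutive nonempty blocks; exactly one block contains two adjacent integers and the other four blocks are singletons. So there are only five possibilities for which block is the doubled one, namely $\{i,i+1\}$ for $i\in\{1,2,3,4,5\}$; call these Cases (1)--(5). The argument will proceed sequentially in the style of Proposition~\ref{prop13}: rule out Cases (1)--(4) in turn, forcing Case (5), which then forces $a_1=1$, $a_2=2$, $a_3=3$, $a_4=4$, and $a_5=a[5,6]$.

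The core computation in each ruled-out case is a lower bound on $V_5$ coming from just the doubled block, using the weighted mean $a[i,i+1]$ as its centroid (Proposition~\ref{prop0}(iii)). Concretely,
\begin{align*}
\te{Case (1): }& \sum_{j=1}^{2} f(j)(j-a[1,2])^2=\tfrac{1}{6},\\
\te{Case (2): }& \sum_{j=2}^{3} f(j)(j-a[2,3])^2=\tfrac{1}{12},\\
\te{Case (3): }& \sum_{j=3}^{4} f(j)(j-a[3,4])^2=\tfrac{1}{24},\\
\te{Case (4): }& \sum_{j=4}^{5} f(j)(j-a[4,5])^2=\tfrac{1}{48},
\end{align*}
and each of these exceeds $\tfrac{1}{64}$, contradicting $V_5\le\tfrac{1}{64}$. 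Hence only Case (5) survives, giving $\ga=\{1,2,3,4,a[5,6]\}$ and $V_5=\tfrac{1}{64}$.

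I do not anticipate a genuine obstacle here: the five cases are exhaustive, and each reduction is a one-line centroid calculation together with comparing a rational number to $\tfrac{1}{64}$. The only mild subtlety is justifying the partition structure of the Voronoi diagram, which follows immediately from the strict ordering of the $a_i$'s together with the fact that every point of $\operatorname{supp}(P)$ must lie in exactly one Voronoi region of positive mass (Proposition~\ref{prop0}(i)--(ii)); this is already implicit in the earlier proofs and I would just invoke it in one sentence rather than re-derive it.
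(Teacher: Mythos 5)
Your proof is correct and takes essentially the same approach as the paper, which simply says to use the technique of the preceding four-means proposition: compare a centroid-based lower bound on $V_5$ for each candidate Voronoi configuration against the candidate distortion $\tfrac{1}{64}$. Your upfront observation that the partition of $\{1,\dots,6\}$ must be of type $(2,1,1,1,1)$ into consecutive blocks neatly compresses what would otherwise be a sequential elimination of cases, but the underlying centroid computations and the contradiction mechanism coincide with the paper's template.
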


\subsection{Optimal quantization for the probability distribution $P$ with $x=\frac7{10}$.}
In this case, the probability mass function $f$ is given by
\[f(j)=P(X=j)=\left\{\begin{array} {cc}\vspace {0.05 in}
\frac 7{10}  \te{ if } j=1, \\  
\vspace {0.05 in}  
 (\frac 3{10})^{j-1}\frac 7{10} \te{ if } j\in \set{ 2, 3, 4, 5}, \\  \vspace {0.05 in}
(\frac 3{10})^{5} \te { if } j=6,\\  \vspace {0.05 in} 
0 \te{ otherwise.}
\end{array}
\right.
 \]
 We now give the following propositions.
\begin{prop} \label{prop22}
The optimal set of two-means is given by $\set{1, a[2, 6]}$ with quantization error $V_2=\frac{174296997}{1000000000}$.
\end{prop}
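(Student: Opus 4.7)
The plan is to mirror the two-cell proof of Proposition~\ref{prop12}, adapted to the new geometric weights. First I would compute $a[2,6] = 24251/10000$ directly from $f(2),\dots,f(6)$, and observe that the midpoint $\frac{1}{2}(1 + a[2,6]) \approx 1.71$ lies strictly between $1$ and $2$. Hence the Voronoi partition induced by the candidate set $\beta := \{1,\, a[2,6]\}$ assigns the singleton $\{1\}$ to the first centroid and $\{2,3,4,5,6\}$ to the second. Evaluating the distortion of $\beta$, most cleanly via the conditional-variance identity
\[
\sum_{j\in S} f(j)\bigl(j - a[S]\bigr)^2 \;=\; \sum_{j\in S} j^2 f(j) \;-\; \frac{\bigl(\sum_{j\in S} j\, f(j)\bigr)^2}{\sum_{j\in S} f(j)}
\]
applied with $S = \{2,3,4,5,6\}$, should yield exactly $174296997/10^9$, and hence $V_2 \leq 174296997/10^9$.

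For the matching lower bound I would take an arbitrary optimal two-means set $\alpha = \{a_1, a_2\}$ with $1 \leq a_1 < a_2 \leq 6$, invoke Proposition~\ref{prop0} to see that $1 \in M(a_1|\alpha)$ and that each $a_i$ equals the conditional mean over its cell, and then rule out every support-point partition other than $\{1\} \mid \{2,3,4,5,6\}$. Following the template of Proposition~\ref{prop12}, two alternatives must be eliminated: (i) $M(a_1|\alpha)$ contains the atom $3$, which gives $V_2 \geq \sum_{j=1}^3 f(j)(j - a[1,3])^2$ with $a[1,3] = 1309/973$; and (ii) $M(a_1|\alpha) = \{1,2\}$, forcing $a_1 = a[1,2] = 16/13$ and $a_2 = a[3,6] = 3417/1000$, which gives the corresponding two-cell sum. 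Applying the variance identity above to each of these lower bounds and comparing with $174296997/10^9$ should produce a strict inequality in both cases, leaving $a_1 = 1,\ a_2 = a[2,6]$ as the only surviving configuration.

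The hard part is purely arithmetic rather than structural. The geometric weights $f(j) = (3/10)^{j-1}(7/10)$ force common denominators on the order of $10^{10}$ with ten-digit numerators, so the elimination inequalities have to be carried out in exact rational arithmetic with a shared denominator throughout. Using the conditional-variance identity, rather than expanding each square $(j - a[S])^2$ term by term, keeps the bookkeeping manageable and reduces every case-check to comparing two explicit fractions with the same small denominator (essentially $3 \cdot 10^9$).
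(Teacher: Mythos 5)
Your proposal matches the paper's proof essentially step for step: establish the upper bound $V_2\leq 174296997/10^9$ via the candidate set $\{1, a[2,6]\}$ after checking the midpoint lies in $(1,2)$, then eliminate the two alternative Voronoi partitions (cell of $a_1$ containing $\{1,2,3\}$, and cell of $a_1$ equal to $\{1,2\}$) by lower-bounding the distortion with the conditional means $a[1,3]$ and $a[1,2]$, $a[3,6]$. Your use of the conditional-variance identity is a reasonable bookkeeping device but does not change the structure of the argument, and your rational values ($a[2,6]=24251/10000$, $a[1,3]=1309/973$, $a[1,2]=16/13$, $a[3,6]=3417/1000$) all check out.
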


\begin{proof} The distortion error due to the set $\gb:=\set{1, a[2, 6]}$ is given by
\begin{align*} \sum_{j=1}^6 f(j)\min_{a\in \gb} (j-a)^2=\sum _{j=2}^6 f(j)(j-a[2, 6])^2=\frac{174296997}{1000000000}.
\end{align*}
Since $V_2$ is the quantization error for two-means, we have $V_2\leq \frac{174296997}{1000000000}=0.174296997$.
Let $\ga:=\set{a_1, a_2}$ be an optimal set of two-means. Without any loss of generality, we can assume that $1\leq a_1<a_2\leq 6$. Notice that the Voronoi region of $a_1$ must contain $1$. Suppose that the Voronoi region of $a_1$ contains the set $\set{1,2,3}$. Then,
\[V_2\geq \sum _{j=1}^3f(j)(j-a[1, 3])^2=\frac{4809}{13900}=0.345971>V_2,\]
which gives a contradiction. Hence, we can assume that the Voronoi region of $a_1$ does not contain $3$. Next, suppose that the Voronoi region of $a_1$ contains the set $\set{1,2}$. Then, the Voronoi region of $a_2$ contains all the remaining elements, and so
\[V_2=\sum _{j=1}^2f(j)(j-a[1,2])^2+\sum _{j=3}^6f(j)(j-a[3, 6])^2=\frac{272139987}{1300000000}=0.209338>V_2,\]
which yields a contradiction.
Hence, we can assume that the Voronoi region of $a_1$ contains only the element $1$, and the remaining elements are contained in the Voronoi region of $a_2$ implying
\[a_1=1, \te{ and } a_2=a[2,6]\]
with quantization error $V_2=\frac{174296997}{1000000000}$. Thus, the proof of the proposition is complete.
\end{proof}

\begin{prop} \label{prop23}
The optimal set of three-means is given by $\set{1, 2, a[3,6]}$ with quantization error $V_3=\frac{4779999}{100000000}$.
\end{prop}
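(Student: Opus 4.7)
The plan is to mirror the case-analysis structure used for the three-means statement at $x=\frac12$: first produce an explicit upper bound for $V_3$ by computing the distortion of the candidate set $\set{1,2,a[3,6]}$, then take an arbitrary optimal triple $\ga=\set{a_1,a_2,a_3}$ with $1\le a_1<a_2<a_3\le 6$ and successively rule out every configuration of the Voronoi cells except the one predicted in the statement.

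For the upper bound I would note that under $\gb=\set{1,2,a[3,6]}$ the Voronoi cells of $1$ and $2$ each contain exactly one support point and contribute $0$, so
\[V_3\le \sum_{j=3}^6 f(j)\,(j-a[3,6])^2=\frac{4779999}{100000000}.\]
Since $\ga$ is totally ordered, the Voronoi region $M(a_1|\ga)$ is an initial segment of $\set{1,\dots,6}$, leaving the possibilities $\set{1}$, $\set{1,2}$, or a superset of $\set{1,2,3}$. The last case is excluded by
\[V_3\ge\sum_{j=1}^3 f(j)\,(j-a[1,3])^2=\frac{4809}{13900},\]
which is the lower bound already used in the two-means argument at $x=\frac{7}{10}$ and far exceeds $\frac{4779999}{10^8}$. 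In the middle case $a_1=a[1,2]=\frac{112}{91}$ is forced, and a direct computation shows that the contribution of the two points $1$ and $2$ alone, namely $\sum_{j=1}^2 f(j)\,(j-a[1,2])^2$, already exceeds $\frac{4779999}{10^8}$; hence no placement of $a_2,a_3$ on $\set{3,4,5,6}$ can rescue optimality. This forces $a_1=1$.

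With $a_1=1$ fixed, $M(a_2|\ga)$ is an interval in $\set{2,\dots,6}$ starting at $2$, so the options are $\set{2}$, $\set{2,3}$, or a superset of $\set{2,3,4}$. The last is ruled out by $V_3\ge\sum_{j=2}^4 f(j)\,(j-a[2,4])^2$, which a direct rational computation shows to exceed $\frac{4779999}{10^8}$. For $M(a_2|\ga)=\set{2,3}$ the third center must be $a_3=a[4,6]$, and I would evaluate
\[\sum_{j=2}^3 f(j)\,(j-a[2,3])^2+\sum_{j=4}^6 f(j)\,(j-a[4,6])^2\]
and verify that it strictly exceeds $\frac{4779999}{10^8}$. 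The remaining option $M(a_2|\ga)=\set{2}$ yields $a_2=2$ and $a_3=a[3,6]$, matching the upper bound and giving the claimed optimal set.

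The main obstacle is the tight numerical margin at $x=\frac{7}{10}$: the runner-up configuration $\set{1,a[2,3],a[4,6]}$ has error only modestly larger than $\frac{4779999}{10^8}$, so the decisive comparison must be carried out in exact rationals rather than with decimal approximations. Once that sharp inequality is checked, the remaining exclusions are comfortable and the proof concludes exactly as in its $x=\frac{1}{2}$ counterpart.
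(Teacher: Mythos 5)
Your proposal is correct and follows essentially the same strategy as the paper: compute the distortion of the candidate $\set{1,2,a[3,6]}$ to get the upper bound $\frac{4779999}{10^8}$, then rule out all other Voronoi configurations. The paper streamlines the elimination a little more than you do: since $\sum_{j=1}^{2}f(j)\,(j-a[1,2])^2=\frac{21}{130}$ and $\sum_{j=2}^{3}f(j)\,(j-a[2,3])^2=\frac{63}{1300}$ already each exceed $\frac{4779999}{10^8}$, the two-point variance bounds immediately kill every case in which $M(a_1|\ga)\supseteq\set{1,2}$ or $M(a_2|\ga)\supseteq\set{2,3}$, making your separate superset exclusions and the full evaluation of the runner-up $\set{1,a[2,3],a[4,6]}$ unnecessary (though harmless).
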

\begin{proof}
The distortion error due to the set $\gb:=\set{1, 2, a[3,6]}$ is given by
\begin{align*} \sum_{j=3}^6 f(j)\min_{a\in \gb} (j-a)^2=\sum _{j=3}^6 f(j)(j-a[3, 6])^2=\frac{4779999}{100000000}=0.04779999.
\end{align*}
Since $V_3$ is the quantization error for three-means, we have $V_3\leq 0.04779999$. Let $\ga:=\set{a_1, a_2, a_3}$ be an optimal set of three-means such that $1\leq a_1<a_2<a_3\leq 6$. Notice that the Voronoi region of $a_1$ must contain $1$. Suppose that the Voronoi region of $a_1$ also contains $2$. Then,
\[V_3\geq \sum_{j=1}^2 f(j)(j-a[1,2])^2 =\frac{21}{130}=0.161538>V_3,\]
which yields a contradiction. Thus, we can assume that the Voronoi region of $a_1$ contains only the element $1$, i.e., $a_1=1$. The Voronoi region of $a_2$ contains $2$. Suppose that the Voronoi region of $a_2$ also contains the set $\set{2,3}$. Then,
\[V_3\geq \sum_{j=2}^3 f(j)(j-a[2,3])^2=\frac{63}{1300}=0.0484615>V_3,\]
which is a contradiction. Hence, the Voronoi region of $a_2$ contains only the element $2$, which yields $a_2=2$, and $a_3=a[3,6]$, with quantization error $V_3=\frac{4779999}{100000000}$. Thus, the proof of the proposition is complete.
\end{proof}
Following the similar techniques as given in Proposition~\ref{prop23}, we can prove the following two propositions.
 \begin{prop} \label{prop24}
The optimal set of four-means is given by $\set{1, 2, 3, a[4,6]}$ with quantization error $V_4=\frac{112833}{10000000}$.
\end{prop}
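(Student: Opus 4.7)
The plan is to follow the cascading template of Propositions~\ref{prop22} and \ref{prop23}. I first verify the upper bound $V_4 \leq \frac{112833}{10000000}$ by computing the distortion of the explicit set $\gb := \set{1, 2, 3, a[4,6]}$. A short calculation gives $a[4,6] = \frac{439}{100}$; since this lies to the right of $\frac{7}{2}$, the Voronoi boundary between $3$ and $a[4,6]$ falls strictly between $3$ and $4$, so each of $1, 2, 3$ sits alone in its own Voronoi cell and $a[4,6]$ captures $\set{4, 5, 6}$. The cost therefore collapses to $\sum_{j=4}^{6} f(j)(j - a[4,6])^2$, which evaluates to $\frac{112833}{10^7}$.

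Next, I take an arbitrary optimal set $\ga = \set{a_1, a_2, a_3, a_4}$ with $1 \leq a_1 < a_2 < a_3 < a_4 \leq 6$, and show in cascade that $a_i = i$ for $i = 1, 2, 3$. At step $i$, I suppose that the Voronoi region of $a_i$ contains the next integer $i+1$. Then the standard minimization argument (bounding $V_4$ below by the partial cost of the Voronoi region of $a_i$, restricting to $\set{i, i+1}$, and optimizing the center) yields
\[V_4 \geq \sum_{j=i}^{i+1} f(j)\bigl(j - a[i, i+1]\bigr)^2.\]
For $i = 1$ the right-hand side equals $\frac{21}{130}$; for $i = 2$ it equals $\frac{63}{1300}$; both values already appear in the proof of Proposition~\ref{prop23}. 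For $i = 3$, the conditional expectation is $a[3,4] = \frac{42}{13}$ and the resulting lower bound is $\frac{189}{13000}$. Each of these three numbers exceeds the upper bound $\frac{112833}{10^7}$, producing contradictions. Hence the Voronoi regions of $a_1, a_2, a_3$ must be exactly $\set{1}, \set{2}, \set{3}$, respectively. The Voronoi region of $a_4$ is then forced to cover $\set{4, 5, 6}$, so $a_4 = a[4, 6]$ by Proposition~\ref{prop0}(iii), and the distortion matches the bound from the first paragraph.

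The main obstacle is purely arithmetic: the $i = 3$ exclusion is the tightest of the three, because merging the comparatively low-probability pair $\set{3, 4}$ is locally the cheapest deviation from the proposed optimum. The margin $\frac{189}{13000} \approx 0.01454$ against $\frac{112833}{10^7} \approx 0.01128$ is narrower than for $i = 1, 2$ but still comfortable. Beyond verifying this single numerical inequality, no new conceptual ingredient is required beyond those used in Propositions~\ref{prop22}--\ref{prop23}.
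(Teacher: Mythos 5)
Your proof is correct and follows precisely the cascading template the paper intends: the paper does not give an explicit proof of Proposition~\ref{prop24}, stating only that it follows by the same technique as Proposition~\ref{prop23}, and your argument is exactly that one-step extension. The numerical checks $a[4,6]=\frac{439}{100}$, $V(P;\gb)=\frac{112833}{10^7}$, $Er[1,2]=\frac{21}{130}$, $Er[2,3]=\frac{63}{1300}$, and $Er[3,4]=\frac{189}{13000}$ are all correct, and each lower bound indeed exceeds $\frac{112833}{10^7}\approx 0.01128$, so the exclusions go through.

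A minor remark on presentation rather than substance: at each step you invoke the bound $V_4\geq \sum_{j=i}^{i+1} f(j)\bigl(j-a[i,i+1]\bigr)^2$ under the hypothesis that the Voronoi region of $a_i$ contains both $i$ and $i+1$; it is worth making explicit that this works because the contribution from any superset containing $\set{i,i+1}$ is at least $\min_a\sum_{j=i}^{i+1}f(j)(j-a)^2=Er[i,i+1]$, and that the cascade delivers $a_i=i$ precisely because, once $a_1,\dots,a_{i-1}$ are pinned, the point $i$ is forced into the Voronoi region of $a_i$ by the interval structure of one-dimensional Voronoi cells together with Proposition~\ref{prop0}(i). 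These are the same implicit steps in the paper's Proposition~\ref{prop23}, so no gap is introduced, but spelling them out would make the argument fully self-contained.
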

 \begin{prop} \label{prop25}
The optimal set of five-means is given by $\set{1, 2, 3, 4, a[5,6]}$ with quantization error $V_5=\frac{1701}{1000000}$.
\end{prop}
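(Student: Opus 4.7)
My plan is to imitate the template used for Propositions~\ref{prop22}--\ref{prop24}, enumerating the possible Voronoi partitions of the support $\set{1,2,3,4,5,6}$ induced by an optimal set of five means and ruling out all but the claimed one. Since any optimal set $\ga=\set{a_1<a_2<a_3<a_4<a_5}$ partitions the six support points into five non-empty blocks of consecutive integers, exactly one block has cardinality two and the other four have cardinality one; there are five such partitions in total.

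First I would establish the upper bound $V_5\leq\frac{1701}{1000000}$ by computing directly the distortion for $\gb=\set{1,2,3,4,a[5,6]}$; because the first four means coincide with the first four support points, only the last two points contribute, and an explicit calculation using $a[5,6]=\frac{53}{10}$ gives the claimed value.

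Next, to pin down the optimal set uniquely, I would assume $\ga$ is optimal and work through the four remaining partition types in the same order as the previous proofs: show in turn that (i) the Voronoi region of $a_1$ cannot contain $2$, so $a_1=1$; (ii) the Voronoi region of $a_2$ cannot contain $3$, so $a_2=2$; (iii) the Voronoi region of $a_3$ cannot contain $4$, so $a_3=3$; (iv) the Voronoi region of $a_4$ cannot contain $5$, so $a_4=4$. Each exclusion reduces to evaluating the two-point conditional mean $a[k,k+1]$ and the corresponding contribution
\begin{equation*}
\sum_{j=k}^{k+1} f(j)\bigl(j-a[k,k+1]\bigr)^2,
\end{equation*}
and checking it already exceeds $\frac{1701}{1000000}$. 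Because $f$ decays geometrically at ratio $\frac{3}{10}$, merging two adjacent points produces much larger contributions at the low end than at the high end, so each of the four inequalities is comfortable; indeed the estimates in Propositions~\ref{prop22} and \ref{prop23} already furnish the first two ($\frac{21}{130}$ and $\frac{63}{1300}$ respectively), and the remaining two follow from the analogous computation using $a[3,4]=\frac{42}{13}$ and $a[4,5]=\frac{55}{13}$.

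Once all four competing partitions are excluded, the only possibility is the block structure $\set{1},\set{2},\set{3},\set{4},\set{5,6}$, which by Proposition~\ref{prop0}(iii) forces $a_i=i$ for $i\leq 4$ and $a_5=a[5,6]$, with quantization error $V_5=\frac{1701}{1000000}$. The only non-routine aspect is the four numerical verifications, but each is straightforward arithmetic with rational numbers and none is close to tight, so no real obstacle is expected; the proof is essentially bookkeeping parallel to the preceding propositions.
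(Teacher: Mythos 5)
Your proposal is correct and matches the paper's own approach, which simply notes that Proposition~\ref{prop25} follows by the same technique as Proposition~\ref{prop23}. Your stated values $a[5,6]=\frac{53}{10}$, $a[3,4]=\frac{42}{13}$, $a[4,5]=\frac{55}{13}$ and the merging penalties $\frac{21}{130}$, $\frac{63}{1300}$, $\frac{189}{13000}$, $\frac{567}{130000}$ are all correct, and each exceeds $\frac{1701}{1000000}$, so the case-by-case exclusion forces the claimed partition.
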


\section{Optimal quantization for infinite discrete distributions} \label{sec2}

In this section, for $n\in\D N$, we investigate the optimal sets of $n$-means for two different infinite discrete distributions. We give them in the following two subsections.

\subsection{Optimal quantization for an infinite discrete distribution with support $\set{\frac 1 n : n\in \D N}$} \label{sec3}

Let $\D N:=\set{1, 2, 3, \cdots}$ be the set of natural numbers, and let $P$ be a Borel probability measure on the set $\set{\frac 1 n : n \in \D N}$ with probability mass function $f$ given by
\[f(x)=\left\{\begin{array} {ll} \vspace{0.05 in}
\frac 1{2^k} & \te{ if } x =\frac 1 k \te { for } k\in \D N,\\ \vspace{0.05 in}
0 &  \te{ otherwise}.
\end{array}
\right.\]
Then, $P$ is a Borel probability measure on $\D R$, and the support of $P$ is given by supp$(P)=\set{\frac 1 n :  n \in \D N}$.  In this section, for the probability measure $P$, we investigate the optimal sets of $n$-means and the $n$th quantization errors for $n\in \D N$. For $k, \ell\in \D N$, where $k\leq \ell$, write
\[[k, \ell]:=\set{\frac 1 n : n \in \D N \te{ and } k\leq n\leq \ell}, \te{ and } [k, \infty):=\set{\frac 1 n : n\in \D N \te{ and } n\geq k}.\]
Further, write
\[Av[k, \ell]: =E\Big (X : X \in [k, \ell]\Big)=\frac{\sum _{n=k}^{\ell} \frac{1}{2^n} \frac 1 n}{\sum_{n=k}^{\ell}\frac{1}{2^n}}, \  Av[k, \infty): =E\Big (X : X \in [k, \infty)\Big)=\frac{\sum _{n=k}^{\infty} \frac{1}{2^n} \frac 1 n}{\sum_{n=k}^{\infty}\frac{1}{2^n}},\]
\[Er[k, \ell]:=\sum _{n=k}^{\ell} \frac{1}{2^n} \Big(\frac 1 n-Av[k, \ell]\Big)^2, \te{ and } Er[k, \infty):=\sum _{n=k}^{\infty} \frac{1}{2^n} \Big(\frac 1 n-Av[k,\infty)\Big)^2.\]
 Notice that
$E(X):=E(X : X \in \te{supp}(P)) =\sum _{n=1}^\infty \frac 1{2^n} \frac 1 n=Av[1, \infty)=\log(2)$, and so the optimal set of one-mean is the set $\set{\log(2)}$ with quantization error
\[V(P)=\sum _{n=1}^{\infty} \frac 1 {2^n} \Big (\frac{1}{n}-\log(2)\Big)^2=Er[1, \infty) =\frac{1}{12} \left(\pi ^2-18 \log ^2(2)\right)=0.101788.\]

\begin{prop} \label{prop01}
The set $\set{Av[2, \infty), 1}$ forms the optimal set of two-means for the probability measure $P$ with quantization error $V_2(P)=Er[2, \infty)=\frac{1}{12} \left(\pi ^2-12-30 \log ^2(2)+24 \log (2)\right)=0.0076288597$.
\end{prop}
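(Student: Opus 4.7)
The plan is to follow the two-step template used throughout Section~\ref{sec1} for the finite cases: first obtain an upper bound by evaluating the distortion at a well-chosen candidate set, then rule out every competing Voronoi partition via a centroid-plus-contradiction argument.

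For the upper bound, I would work with the candidate $\gb := \set{Av[2, \infty), 1}$. Using $\sum_{n \ge 2} 2^{-n} = \tfrac 12$ and $\sum_{n \ge 1} \tfrac{1}{n \cdot 2^n} = \log(2)$, one computes $Av[2, \infty) = 2\log(2) - 1 \approx 0.386$, so the midpoint $\tfrac 12(1 + Av[2, \infty)) = \log(2) \approx 0.693$ sits strictly between $\tfrac 12$ and $1$. Thus the Voronoi region of $1$ under $\gb$ is exactly $\set{1}$, while that of $Av[2, \infty)$ is $\set{1/n : n \ge 2}$, and the distortion of $\gb$ equals $Er[2, \infty)$. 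A routine closed-form computation using the series for $-\log(1-x)$ and the dilogarithm $\mathrm{Li}_2(x)$ evaluated at $x = \tfrac 12$ yields $Er[2, \infty) = \tfrac{1}{12}(\pi^2 - 12 - 30 \log^2(2) + 24 \log(2)) \approx 0.00763$, and hence $V_2 \le Er[2, \infty)$.

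For the lower bound, let $\ga := \set{a_1, a_2}$ be an optimal set of two-means, without loss of generality with $a_1 < a_2$. Since every Voronoi cell in $\D R$ is an interval and $\mathrm{supp}(P)$ is totally ordered on the positive axis, the cell of $a_2$ must consist of the largest support points, say $\set{1, 1/2, \ldots, 1/k_0}$ for some $k_0 \ge 1$; Proposition~\ref{prop0}(iii) then forces $a_2 = Av[1, k_0]$ and $a_1 = Av[k_0 + 1, \infty)$. The task reduces to showing $k_0 = 1$. Suppose for contradiction that $k_0 \ge 2$; retaining only the contributions from the two largest points $1$ and $1/2$ in the cell of $a_2$ gives
\[ V_2 \ge \tfrac{1}{2}(1 - a_2)^2 + \tfrac{1}{4}\bigl(\tfrac 12 - a_2\bigr)^2 =: g(a_2). \]
The quadratic $g$ is convex with global minimum $g(5/6) = \tfrac{1}{24}$, so $V_2 \ge \tfrac{1}{24} \approx 0.0417$, contradicting $V_2 \le Er[2, \infty) \approx 0.00763$ from the first step. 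Hence $k_0 = 1$, which forces $a_2 = 1$ and $a_1 = Av[2, \infty)$, giving $V_2 = Er[2, \infty)$.

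The main obstacle is really just the closed-form evaluation of $Er[2, \infty)$, which relies on $\sum_{n \ge 2} \tfrac{1}{n \cdot 2^n} = \log(2) - \tfrac 12$ and $\sum_{n \ge 1} \tfrac{1}{n^2 \cdot 2^n} = \tfrac{\pi^2}{12} - \tfrac{\log^2(2)}{2}$; once those identities are in hand the algebra is routine expansion. The centroid-plus-contradiction step is conceptually clean because the crude two-point lower bound $g(5/6) = 1/24$ overshoots the target by nearly a factor of six, producing a comfortable contradiction that is uniform in $k_0 \ge 2$ and needs no case split.
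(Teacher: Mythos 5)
Your proof is correct and follows essentially the same route as the paper's: the identical upper bound $V_2(P)\leq Er[2,\infty)$ from the candidate $\{Av[2,\infty),1\}$, followed by the same contradiction — if the larger center's cell contains both $1$ and $\tfrac12$ then $V_2(P)\geq \tfrac1{24}$, which overshoots $Er[2,\infty)\approx 0.00763$. Your explicit convex minimization $g(5/6)=\tfrac1{24}$ is precisely the paper's invocation of $Er[1,2]=\tfrac1{24}$, just unwound.
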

\begin{proof}
Consider the set $\gb:=\set{Av[2, \infty), 1}$. Since $\frac 13<\frac 12 (Av[2, \infty)+1)<1$, the Voronoi region of $1$ contains only the element $1$, and the Voronoi region of $Av[2, \infty)$ contains the set $\set{\frac 1 n : n\geq 2}$. Hence, the distortion error due to the set $\gb$ is given by
\[V(P; \gb)=Er[2, \infty)=\frac{1}{12} \left(\pi ^2-12-30 \log ^2(2)+24 \log (2)\right)=0.0076288597.\]
Since $V_2(P)$ is the quantization error for two-means, we have $V_2(P)\leq 0.0076288597$. Let $\ga:=\set{a_2, a_1}$ be an optimal set of two-means. Due to Proposition~\ref{prop0}, we can assume that $0\leq a_2<a_1\leq 1$. The Voronoi region of $a_1$ must contain $1$. Suppose that the Voronoi region of $a_1$ also contains $\frac 12$. Then,
\[V_2(P)\geq Er[1,2]=\frac{1}{24}=0.0416667>V_2(P),\]
which leads to a contradiction. Hence, we can assume that the Voronoi region of $a_1$ does not contain $\frac 12$. Again, by Proposition~\ref{prop0}, the Voronoi region of $a_2$ cannot contain the element $1$. Thus, we have
$a_2=Av[2, \infty)$, and $a_1=1$, and the corresponding quantization error is $V_2(P)=Er[2, \infty)=0.0076288597$. Thus, the proof of the proposition is complete.
\end{proof}

\begin{prop} \label{prop02}
The set $\set{Av[3, \infty), \frac 12, 1}$ forms the optimal set of three-means for the probability measure $P$ with quantization error $V_3(P)=Er[3, \infty)=0.00116437359$.
\end{prop}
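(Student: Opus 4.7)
The plan is to mirror the argument used in Proposition~\ref{prop01} (the two-means case), extending it by one additional step to handle the third center.

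First I would verify the upper bound by direct computation. For $\gb:=\set{Av[3,\infty),\frac12,1}$, the midpoint between $1$ and $\frac12$ is $\frac34$, which lies strictly to the right of every support point except $1$, so the Voronoi region of $1$ contains only $1$. Similarly, the midpoint between $\frac12$ and $Av[3,\infty)$ lies strictly between $\frac13$ and $\frac12$, so the Voronoi region of $\frac12$ contains only $\frac12$, and the Voronoi region of $Av[3,\infty)$ contains $\set{\frac 1n:n\geq 3}$. Hence $V(P;\gb)=Er[3,\infty)$, giving $V_3(P)\leq Er[3,\infty)\approx 0.00116437$.

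Next I would run the usual contradiction argument. Let $\ga:=\set{a_3,a_2,a_1}$ be an optimal set of three-means with $0\leq a_3<a_2<a_1\leq 1$. The Voronoi region of $a_1$ must contain $1$ (as it is the largest center and $1$ is the largest support point). If this region also contains $\frac12$, then
\[V_3(P)\geq Er[1,2]=\tfrac{1}{24}=0.0416667>V_3(P),\]
a contradiction. Together with Proposition~\ref{prop0}(iii) applied to the Voronoi region $\set{1}$, this forces $a_1=1$. Now the Voronoi region of $a_2$ must contain $\frac12$ (since $a_3<a_2<a_1=1$ and $\frac12$ must belong to some Voronoi region with positive mass, and it is nearer to $a_2$ than to $a_1$ by the previous step). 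Suppose the Voronoi region of $a_2$ also contains $\frac13$. A short calculation gives $Av[2,3]=\frac49$ and
\[Er[2,3]=\tfrac14\bigl(\tfrac12-\tfrac49\bigr)^2+\tfrac18\bigl(\tfrac13-\tfrac49\bigr)^2=\tfrac{1}{432}\approx 0.002315,\]
so $V_3(P)\geq Er[2,3]>V_3(P)$, a contradiction. Thus the Voronoi region of $a_2$ contains only $\frac12$, forcing $a_2=\frac12$. By Proposition~\ref{prop0}(i), the Voronoi region of $a_3$ must contain at least one support point, and since the points $\set{\frac1n:n\geq 3}$ are all strictly less than $\frac12$, they all lie in the Voronoi region of $a_3$, yielding $a_3=Av[3,\infty)$ and $V_3(P)=Er[3,\infty)$.

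The only routine-but-slightly-delicate step is computing $Er[2,3]$ and confirming it exceeds the candidate value $Er[3,\infty)$; this numeric comparison is what forces the split between $\frac12$ and $\set{\frac1n:n\geq 3}$ to be strict at the optimum. The rest of the argument is a carbon copy of Proposition~\ref{prop01}, with the roles of the tail $[2,\infty)$ and the point $1$ shifted by one index.
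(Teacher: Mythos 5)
Your proposal is correct and follows essentially the same route as the paper: verify the upper bound $V_3(P)\leq Er[3,\infty)$, then rule out the Voronoi region of $a_2$ containing $\frac13$ by the comparison $Er[2,3]=\frac{1}{432}>Er[3,\infty)$, which matches the paper's numeric bound $0.002314814815$. The only difference is that you spell out the steps the paper covers by the phrase ``Proceeding as Proposition~\ref{prop01}'' (establishing $a_1=1$ via $Er[1,2]$), and you compute $Av[2,3]$ and $Er[2,3]$ explicitly rather than just citing the decimal value.
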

\begin{proof}
Consider the set $\gb:=\set{Av[3, \infty), \frac 12, 1}$. Since, $\frac 13 <\frac 12 (Av[3, \infty)+\frac 12)<\frac 12$, and $\frac 12<\frac 12(\frac 12+1)<1$, the distortion error due to the set $\gb:=\set{Av[3, \infty), \frac 12, 1}$ is given by
\[V(P; \gb)=Er[3, \infty)=\frac{1}{24} \left(2 \pi ^2-51-108 \log ^2(2)+120 \log (2)\right)=0.00116437359.\]
Since $V_3(P)$ is the quantization error for three-means, we have $V_3(P)\leq 0.00116437359$. Let $\ga:=\set{a_3, a_2, a_1}$ be an optimal set of three-means such that $0\leq a_3<a_2<a_1\leq 1$. Proceeding as Proposition~\ref{prop01}, we can show that $a_1=1$. Suppose that the Voronoi region of $a_2$  contains $\frac 12$ and $\frac 13$. Then,
  \[V_3(P)\geq Er[2,3]=0.002314814815>V_3(P),\]
which is a contradiction. Hence, the Voronoi region of $a_2$ cannot contain $\frac 13$. Thus, we have
\[a_3=Av[3, \infty), \, a_2=\frac 12, \te{ and } a_1=1,\]
with quantization error $V_3(P)=Er[3, \infty)=0.00116437359$.
Thus, the proof of the proposition is complete.
\end{proof}

\begin{prop} \label{prop03}
The set $\set{Av[4, \infty), \frac 13, \frac 12, 1}$ forms the optimal set of four-means for the probability measure $P$ with quantization error $V_4(P)=Er[4, \infty)=0.0002418966477$.
\end{prop}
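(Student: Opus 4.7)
The plan is to follow the same inductive pattern already established in Propositions~\ref{prop01} and \ref{prop02}: exhibit the candidate set, bound the quantization error from above by its distortion, then peel off the optimal points one at a time from the right, using Proposition~\ref{prop0} together with sharp error estimates on pairs of consecutive support points to rule out any other Voronoi configuration.

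First I would verify that the candidate set $\gb:=\set{Av[4,\infty),\frac 13,\frac 12, 1}$ induces the Voronoi partition $\set{\frac 1 n : n\geq 4}\mid\set{\frac 13}\mid\set{\frac 12}\mid\set{1}$. This reduces to checking the midpoint inequalities
\[\tfrac 14<\tfrac 12\bigl(Av[4,\infty)+\tfrac 13\bigr)<\tfrac 13,\q \tfrac 13<\tfrac 12\bigl(\tfrac 13+\tfrac 12\bigr)<\tfrac 12,\q \tfrac 12<\tfrac 12\bigl(\tfrac 12+1\bigr)<1,\]
the latter two being immediate and the first requiring only a rough estimate of $Av[4,\infty)$. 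Once the partition is confirmed, the distortion of $\gb$ is exactly $Er[4,\infty)$, and this gives the upper bound $V_4(P)\leq Er[4,\infty)\approx 0.0002418966$.

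For the matching lower bound, let $\ga:=\set{a_4,a_3,a_2,a_1}$ be an optimal set with $0\leq a_4<a_3<a_2<a_1\leq 1$. I would eliminate alternative configurations from the top down, exactly as in the previous two propositions. By Proposition~\ref{prop0}, the point $1$ lies in the Voronoi region of $a_1$; if that region also contained $\frac 12$, then $V_4(P)\geq Er[1,2]=\frac 1{24}$, contradicting the upper bound, so $a_1=1$. Similarly, $\frac 12$ must lie in the Voronoi region of $a_2$; if that region also contained $\frac 13$, then $V_4(P)\geq Er[2,3]\approx 0.00231$, again contradicting the bound, so $a_2=\frac 12$. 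The same move forces $a_3=\frac 13$ via the estimate $V_4(P)\geq Er[3,4]$, which I would compute explicitly and check exceeds $Er[4,\infty)$. Finally, with $\set{1,\tfrac 12,\tfrac 13}$ locked in, the point $a_4$ must serve all remaining mass in $[4,\infty)$, and Proposition~\ref{prop0}(iii) gives $a_4=Av[4,\infty)$, with corresponding error $Er[4,\infty)$.

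The only step that is not completely mechanical is verifying the strict inequality $Er[3,4]>Er[4,\infty)$, since both quantities are small. I would handle this by a direct closed-form computation: $Er[3,4]=\frac{1}{2^3}(\frac 13-Av[3,4])^2+\frac{1}{2^4}(\frac 14-Av[3,4])^2$ with $Av[3,4]=\bigl(\tfrac 1{3\cdot 8}+\tfrac 1{4\cdot 16}\bigr)/\bigl(\tfrac 18+\tfrac 1{16}\bigr)$, and for $Er[4,\infty)$ use the same series manipulations (involving $\sum 1/(n^2 2^n)$ and $\log 2$) that produced the closed forms in Propositions~\ref{prop01} and \ref{prop02}. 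This numerical comparison is the main, and essentially only, obstacle; no conceptually new argument beyond the template already set in the earlier propositions is required.
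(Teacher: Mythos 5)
Your proposal is correct and follows exactly the template the paper invokes: the paper's proof of this proposition is literally ``similar to the proof of Proposition~\ref{prop02},'' and you have filled in the details of that template faithfully, correctly identifying the one nontrivial numerical check as $Er[3,4]>Er[4,\infty)$ (indeed $Er[3,4]=\tfrac{1}{3456}\approx 0.000289 > 0.000242 \approx Er[4,\infty)$).
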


\begin{proof}
The proof of this proposition is similar to the proof of Proposition~\ref{prop02}.
\end{proof}

\begin{prop} \label{prop04}
The set $\set{Av[5, \infty), \frac 14, \frac 13, \frac 12, 1}$ forms the optimal set of five-means for the probability measure $P$ with quantization error $V_5(P)=Er[5, \infty)=0.00005991266593$.
\end{prop}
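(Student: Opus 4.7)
The plan is to follow the same strategy as Propositions~\ref{prop02} and~\ref{prop03}: verify the upper bound $V_5(P) \le Er[5, \infty)$ directly, then take an arbitrary optimal set $\ga = \set{a_5, a_4, a_3, a_2, a_1}$ with $0 \le a_5 < a_4 < a_3 < a_2 < a_1 \le 1$ and peel off the identifications $a_1 = 1$, $a_2 = \frac 12$, $a_3 = \frac 13$, $a_4 = \frac 14$ in that order. Once these four centers are pinned down, Proposition~\ref{prop0} forces $a_5 = Av[5, \infty)$ as the conditional expectation of $X$ on its Voronoi region $\set{\frac 1 n : n \ge 5}$. For the upper bound, one checks that the successive midpoints $\frac 12(1 + \frac 12)$, $\frac 12(\frac 12 + \frac 13)$, $\frac 12(\frac 13 + \frac 14)$, and $\frac 12(\frac 14 + Av[5, \infty))$ separate the points $1, \frac 12, \frac 13, \frac 14$ and the tail $\set{\frac 1 n : n \ge 5}$ into the intended Voronoi cells, so the distortion for the candidate set collapses to $Er[5, \infty)$.

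The identifications $a_1 = 1$, $a_2 = \frac 12$, and $a_3 = \frac 13$ are obtained by the same sort of elimination argument as in the earlier propositions. For each $k \in \set{1, 2, 3}$, once $a_1, \ldots, a_{k-1}$ have been fixed at $1, \frac 12, \ldots, \frac 1{k-1}$, if the Voronoi region of $a_k$ were to contain $\frac 1{k+1}$ in addition to $\frac 1 k$, then the total distortion would be at least $Er[k, k+1]$. Direct calculation gives $Er[1, 2] = \frac 1{24}$, $Er[2, 3] = \frac 1{432}$, and $Er[3, 4] = \frac 1{3456}$, each of which exceeds the upper bound $V_5(P) \approx 0.00005991$ by at least an order of magnitude; this rules out the merge and forces $a_k = \frac 1 k$ in turn.

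The main obstacle, and the point at which the pattern of the earlier propositions breaks down, is the step $a_4 = \frac 14$. Here the simple one-interval bound is not enough: a direct calculation gives $Er[4, 5] = \frac{1}{19200} \approx 0.00005208$, which is actually \emph{less} than $V_5(P)$. To patch this, I would argue by cases on the Voronoi region of $a_4$. If it is $\set{\frac 1 n : 4 \le n \le m}$ with $m \ge 6$, then $V_5(P) \ge Er[4, m] \ge Er[4, 6]$, and direct calculation gives $Er[4, 6] > 10^{-4}$, a contradiction. The delicate remaining case is $m = 5$, for which the remaining support $\set{\frac 1 n : n \ge 6}$ must lie in the Voronoi region of $a_5$, giving $V_5(P) \ge Er[4, 5] + Er[6, \infty)$. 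Using the closed form $Er[k, \infty) = \sum_{n \ge k} \frac{1}{n^2 2^n} - \frac{Av[k, \infty)^2}{2^{k-1}}$ (obtained by expanding the square in the definition of $Er$), one computes $Er[6, \infty) \approx 0.0000167$, and $Er[4, 5] + Er[6, \infty) \approx 0.0000688 > V_5(P)$, yielding the final contradiction. The margin here is less than $10^{-5}$, so the careful numerical accounting in this step is what makes the five-means case genuinely harder than the earlier propositions.
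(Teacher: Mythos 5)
Your proposal is correct and follows essentially the same line of argument as the paper: establish the upper bound $V_5(P) \le Er[5,\infty)$, pin down $a_1=1$, $a_2=\frac 12$, $a_3=\frac 13$ by ruling out merges, then handle $a_4=\frac 14$ with the two-case analysis on its Voronoi region (the $m\ge 6$ case via $Er[4,6]$, and the tight $m=5$ case via $Er[4,5]+Er[6,\infty)$). You correctly flag the $m=5$ case as the delicate one where $Er[4,5]$ alone is insufficient, which is exactly what the paper's proof does by computing $Er[6,\infty)+Er[4,5]=0.00006872664638>V_5(P)$; your supporting computations ($Er[1,2]=\frac 1{24}$, $Er[2,3]=\frac 1{432}$, $Er[3,4]=\frac 1{3456}$, $Er[4,5]=\frac 1{19200}$, and the closed form for $Er[k,\infty)$) are all accurate.
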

\begin{proof}
The distortion error due to the set $\gb:=\set{Av[5, \infty), \frac 14, \frac 13, \frac 12, 1}$ is given by
\[V(P; \gb):=Er[5, \infty)=Er[5, \infty)=0.00005991266593.\]
Since $V_5(P)$ is the quantization error for five-means, we have $V_5(P)\leq 0.00005991266593$. Let $\ga:=\set{a_5, a_4, a_3, a_2, a_1}$ be an optimal set of five-means such that $0 \leq a_5<a_4<a_3<a_2<a_1\leq 1$. Proceeding as Proposition~\ref{prop02}, we can show that $a_1=1, \, a_2=\frac 12$, and $a_3=\frac 13$. We now show that $a_4=\frac 14$.
Suppose that the Voronoi region of $a_4$ contains $\frac 14, \, \frac 15$, and $\frac 16$.
Then,
\[V_5(P)\geq Er[4, 6]=0.0001116071429>V_5(P),\]
which is a contradiction. Assume that the Voronoi region of $a_4$ contains only the elements $\frac 14$, and $\frac 15$. Then, the Voronoi region of $a_5$ contains the set $[6, \infty)$, and so we have
 \[V_5(P)=Er[6, \infty)+Er[4, 5]=0.00006872664638>V_5(P),\]
 which leads to a contradiction. Hence, we can assume that the Voronoi region of $a_4$ contains only the element $\frac 14$. Thus, we have
$a_5=Av[5, \infty), \, a_4=\frac 14, \, a_3=\frac 13, \, a_2=\frac 12, \te{ and } a_1=1$  with quantization error $V_5(P)=Er[5, \infty)=0.00005991266593$. Thus, the proof of the proposition is complete.
\end{proof}

\begin{prop} \label{prop05}
The set $\set{Av[7, \infty), Av[5, 6],  \frac 14, \frac 13, \frac 12, 1}$ forms the optimal set of six-means for the probability measure $P$ with quantization error \[V_6(P)=Er[7, \infty)+Er[5, 6]=0.00001658886625.\]
\end{prop}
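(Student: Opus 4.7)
The plan is to follow the template of Propositions~\ref{prop01}--\ref{prop04}: first establish the upper bound $V_6(P) \leq 0.00001658886625$ using the candidate set, then peel off the centers $a_1,\ldots,a_4$ inductively, and finally handle the remaining two centers $a_5, a_6$ by a short case analysis.

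First I would verify the distortion error for $\gb := \set{Av[7,\infty), Av[5,6], \frac 14, \frac 13, \frac 12, 1}$. Since the midpoint of any two consecutive elements of $\gb$ lies strictly between the corresponding reciprocals $\frac 1 n$ (in particular, the midpoint of $Av[7,\infty)$ and $Av[5,6]$ lies in $(\frac 17, \frac 16)$, and the midpoint of $Av[5,6]$ and $\frac 14$ lies in $(\frac 15, \frac 14)$), the Voronoi diagram of $\gb$ partitions $\te{supp}(P)$ into the six blocks $\set{1}$, $\set{\frac 12}$, $\set{\frac 13}$, $\set{\frac 14}$, $\set{\frac 15, \frac 16}$, and $[7,\infty)$. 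Hence $V(P;\gb) = Er[5,6] + Er[7,\infty) = 0.00001658886625$, giving the upper bound.

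Next let $\ga := \set{a_6, a_5, a_4, a_3, a_2, a_1}$ with $0 \leq a_6 < a_5 < \cdots < a_1 \leq 1$ be an optimal set of six-means. I would establish inductively that $a_1 = 1$, $a_2 = \frac 12$, $a_3 = \frac 13$, and $a_4 = \frac 14$: if the Voronoi region of $a_j$ contained $\frac 1{j+1}$ in addition to $\frac 1 j$, then $V_6(P) \geq Er[j, j+1]$, which is several orders of magnitude larger than the upper bound, and Proposition~\ref{prop0} then forces $a_j = \frac 1 j$. It remains to determine $a_5$ and $a_6$. Their Voronoi regions together cover $\set{\frac 1 n : n \geq 5}$, and monotonicity of $\frac 1 n$ combined with $a_6 < a_5$ forces these regions to be an initial segment and a tail; thus there is a unique integer $m \geq 5$ such that the Voronoi region of $a_5$ equals $\set{\frac 1 n : 5 \leq n \leq m}$ and that of $a_6$ equals $[m+1,\infty)$. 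If $m = 5$, then $a_5 = \frac 15$, $a_6 = Av[6,\infty)$, and $V_6(P) \geq Er[6,\infty)$; if $m \geq 7$, then $\set{\frac 15, \frac 16, \frac 17}$ lies in the Voronoi region of $a_5$, so $V_6(P) \geq Er[5,7]$. A direct numerical evaluation shows each of these lower bounds strictly exceeds $0.00001658886625$, a contradiction. Hence $m = 6$, yielding $a_5 = Av[5,6]$, $a_6 = Av[7,\infty)$, and the stated quantization error.

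The main obstacle is the numerical verification in the final step: since the optimum is already of order $10^{-5}$, one must compute $Er[6,\infty)$ and $Er[5,7]$ with enough precision to confirm that both strictly exceed the upper bound. The closed forms for $Er[k,\infty)$ involve $\log 2$, $\pi^2$, and the dilogarithm $\te{Li}_2(\frac 12)$, so this verification is routine but must be carried out carefully; everything else in the argument is a direct adaptation of the reasoning used in the preceding propositions.
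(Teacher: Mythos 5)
Your proposal is correct and follows essentially the same route as the paper: verify the upper bound via the midpoint conditions for $\gb$, peel off $a_1=1,\dots,a_4=\frac14$ by comparing against $Er[j,j+1]$ and invoking Proposition~\ref{prop0}(iii), and then eliminate the cutoff cases $m=5$ and $m\geq 7$ for $a_5$ by checking that $Er[6,\infty)=0.00001664331305$ and $Er[5,7]=0.00002576328150$ both strictly exceed $0.00001658886625$. These are exactly the two numerical comparisons the paper makes, so the argument is a faithful reconstruction.
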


\begin{proof}
Notice that $\frac 1 7=0.142857<\frac 12(Av[7, \infty)+ Av[5, 6])=0.158488<0.166667=\frac 16$, and $\frac{1}{5}<\frac{1}{2} (Av[5, 6]+\frac{1}{4})<\frac{1}{4}$. Hence, the distortion error due to the set $\gb:=\set{Av[7, \infty), Av[5,6], \frac 14, \frac 13, \frac 12, 1}$ is given by
\[V(P; \gb)=Er[7, \infty)+Er[5, 6]=0.00001658886625.\]
Since $V_6(P)$ is the distortion error for six-means, we have $V_6(P)\leq 0.00001658886625$. Let $\ga:=\set{a_6, a_5, a_4, a_3, a_2, a_1}$ be an optimal set of six-means such that $0\leq a_6<a_5<\cdots<a_1\leq 1$. Proceeding in the similar way as in the proof of Proposition~\ref{prop02}, we can show that $a_3=\frac 13, \, a_2=\frac 12$, and $a_1=1$. Proceeding in the similar way as in the proof of Proposition~\ref{prop04}, we can show that $a_4=\frac 1 4$. We now show that $a_5=Av[5, 6]$. Notice that the Voronoi region of $a_5$ must contain $\frac 1 5$.
Suppose that the Voronoi region of $a_5$ contains $\frac 17$ and $\frac 16$ as well. Then,
\[V_6(P)\geq Er[5, 7]=0.00002576328150>V_6(P),\]
which leads to a contradiction. Suppose that the Voronoi region of $a_5$ contains only the element $\frac 15$, i.e., $a_5=\frac 15$. Then,
\[V_6(P)=Er[6, \infty)=0.00001664331305>V_6(P),\]
which yields a contradiction. Hence, we can assume that the Voronoi region of $a_5$ contains only the two elements $\frac 16$ and $\frac 15$. Thus, we have
\[a_6=Av[7, \infty), \, a_5=Av[5, 6], \, a_4=\frac 14, \, a_3=\frac 13, \, a_2=\frac 12, \te{ and } a_1=1,\]
and the quantization error is $V_6(P)=Er[7, \infty)+Er[5, 6]=0.00001658886625$. Thus, the proof of the proposition is complete.
\end{proof}

In the following proposition, we calculate the optimal set of $n$-means and the $n$th quantization error for $n=300$.
\begin{prop} \label{prop06}
The set $\set{Av[301, \infty), Av[299, 300], \frac 1 {298}, \frac 1{297},  \cdots, \frac 13, \frac 12, 1}$ forms the optimal set of $300$-means for the probability measure $P$ with quantization error $V_{300}(P)=Er[301, \infty)+Er[299, 300]=1.564317642582409606174128\times 10^{-100}$.
\end{prop}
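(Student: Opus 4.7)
The plan is to mirror the inductive argument developed in Propositions~\ref{prop01}--\ref{prop05}, adapted to the larger index $n=300$. First I would establish the upper bound $V_{300}(P)\leq Er[301,\infty)+Er[299,300]$ by verifying that the Voronoi partition induced by the candidate set
\[\gb:=\set{Av[301,\infty),\,Av[299,300],\,\tfrac1{298},\tfrac1{297},\ldots,\tfrac12,\,1}\]
agrees with the intended cluster structure consisting of the tail $[301,\infty)$, the pair $\set{\tfrac1{300},\tfrac1{299}}$, and the singletons $\set{\tfrac1{298}},\ldots,\set{\tfrac12},\set{1}$. This reduces to checking two nontrivial midpoint inequalities, namely $\tfrac1{301}<\tfrac12(Av[301,\infty)+Av[299,300])<\tfrac1{300}$ and $\tfrac1{299}<\tfrac12(Av[299,300]+\tfrac1{298})<\tfrac1{298}$, together with the trivial interlacing $\tfrac1{k+1}<\tfrac12(\tfrac1k+\tfrac1{k+1})<\tfrac1k$ for $1\leq k\leq 297$. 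Once the partition is confirmed, summing gives $V(P;\gb)=Er[301,\infty)+Er[299,300]$.

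For the lower bound, let $\ga=\set{a_{300}<a_{299}<\cdots<a_1}$ be any optimal set of $300$-means. I would inductively show that $a_j=\tfrac1j$ for $j=1,2,\ldots,298$, in direct analogy with Propositions~\ref{prop01}--\ref{prop04}. Assuming $a_1,\ldots,a_{j-1}$ have already been pinned at $1,\tfrac12,\ldots,\tfrac1{j-1}$, the Voronoi region of $a_j$ must contain $\tfrac1j$ by Proposition~\ref{prop0}; if it also contained $\tfrac1{j+1}$, then $V_{300}(P)\geq Er[j,j+1]$, which one checks strictly exceeds the proposed upper bound; and $a_j$ cannot take any value other than $\tfrac1j$ without violating the centroid condition of Proposition~\ref{prop0}. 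Iterating for $j=1,\ldots,298$ pins down all but the two smallest centers.

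For the remaining centers $a_{300}<a_{299}$ I would argue as in the proof of Proposition~\ref{prop05}. The Voronoi region of $a_{299}$ must contain $\tfrac1{299}$. Two exclusion steps remain: if this region also contains $\tfrac1{301}$, then $V_{300}(P)\geq Er[299,301]$; and if instead $a_{299}=\tfrac1{299}$ alone, then $a_{300}=Av[300,\infty)$, forcing $V_{300}(P)=Er[300,\infty)$. Ruling both cases out leaves the unique configuration $a_{299}=Av[299,300]$, $a_{300}=Av[301,\infty)$ with quantization error $Er[301,\infty)+Er[299,300]$.

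The main obstacle lies in this last paragraph: the quantities $Er[300,\infty)$ and $Er[301,\infty)+Er[299,300]$ are both of order $10^{-100}$ and differ by an even smaller amount, so a direct numerical comparison demands either high-precision arithmetic or closed-form evaluation of the tail sums $\sum_{n\geq k}2^{-n}/n$ and $\sum_{n\geq k}2^{-n}/n^2$ in terms of dilogarithms at $\tfrac12$. A cleaner route would be to establish, once and for all, the merge-versus-split inequalities $Er[k,\infty)>Er[k+1,\infty)+Er[k-1,k]$ and $Er[k-1,k+1]>Er[k-1,k]+Er[k+1,\infty)$ uniformly in $k\geq 5$; this is precisely the dichotomy that first activates at $n=6$ in Proposition~\ref{prop05}, and its validity at $k=300$ delivers the present proposition immediately.
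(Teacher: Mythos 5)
Your overall strategy mirrors the paper's (upper bound via the candidate set, lower bound via a left-to-right pinning argument), but two of the specific lower bounds you invoke are too weak to close the argument, and the failure is not cosmetic.

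First, the inductive step. You claim that if the Voronoi region of $a_j$ contained $\tfrac1{j+1}$ then $V_{300}(P)\geq Er[j,j+1]$ "which one checks strictly exceeds the proposed upper bound." A short computation gives $Er[k,k+1]=\dfrac{2^{-k}}{3k^2(k+1)^2}$, so $Er[297,298]\approx 1.67\times10^{-100}$ does (barely) exceed the target $1.564\ldots\times10^{-100}$, but $Er[298,299]\approx 0.82\times10^{-100}$ does \emph{not}. The single-pair bound therefore pins $a_1,\dots,a_{297}$ but fails at $j=298$. The paper is aware of this and treats $a_{298}$ separately by appealing to the two-step argument of Proposition~\ref{prop04} (rule out a three-point block via $Er[298,300]$, then rule out the two-point block by accounting for the error contributed by the remaining centers), not by a bare $Er[298,299]$ comparison.

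Second, the exclusion for $a_{299}$. You assert that if the region of $a_{299}$ contains $\tfrac1{301}$ then $V_{300}(P)\geq Er[299,301]$ gives a contradiction. But from the paper's own numbers $Er[302,\infty)+Er[299,301]=1.698\ldots\times10^{-100}$, and $Er[302,\infty)$ is roughly $0.59\times10^{-100}$, so $Er[299,301]\approx 1.11\times10^{-100}$, which is \emph{below} the proposed upper bound $1.564\ldots\times10^{-100}$. Dropping the tail term is fatal. This is precisely why the paper splits into the two subcases "region $\supseteq\{299,300,301,302\}$" (bounded below by $Er[299,302]\approx1.95\times10^{-100}$) and "region $=\{299,300,301\}$ exactly" (exact value $Er[302,\infty)+Er[299,301]$); both exceed the target, but the crude single-cluster bound does not. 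For the same reason, the "merge-versus-split inequality" $Er[k-1,k+1]>Er[k-1,k]+Er[k+1,\infty)$ that you propose as a cleaner uniform route is in fact \emph{false} at $k=300$; the correct configuration comparison is $Er[k-1,k+1]+Er[k+2,\infty)>Er[k-1,k]+Er[k+1,\infty)$, and the tail term on the left is not negligible.

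Your honest caveat about $10^{-100}$-scale arithmetic is well taken: the paper resolves it by high-precision evaluation of the closed-form tail sums rather than by any structural inequality, and Remark~\ref{rem1} explicitly leaves the general $n\geq 6$ case open, so a uniform merge-versus-split lemma is genuinely beyond the scope of what this proposition establishes.
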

 
\begin{proof}
Notice that \[\frac 1 {301}=0.003322259136<\frac 12(Av[301, \infty)+ Av[299, 300])=0.003326047849<0.003333333333=\frac 1{300},\] and $\frac{1}{299}=0.003344481605<\frac{1}{2} (Av[299, 300]+\frac{1}{298})=0.003348235106<0.003355704698=\frac{1}{298}$. Hence, the distortion error due to the set
\[\gb:=\set{Av[301, \infty), Av[299, 300], \frac 1 {198}, \frac 1{197},  \cdots, \frac 13, \frac 12, 1}\] is given by
\[V(P; \gb)=Er[301, \infty)+Er[299, 300]=1.564317642582409606174128\times 10^{-100}.\]
Since $V_{300}(P)$ is the distortion error for $300$-means, we have \[V_{300}(P)\leq 1.564317642582409606174128\times 10^{-100}.\] Let $\ga:=\set{a_{300}, a_{299}, \cdots, a_3, a_2, a_1}$ be an optimal set of $300$-means such that $0\leq a_{300}<a_{299}<\cdots<a_1\leq 1$. Proceeding in the similar way as in the proof of Proposition~\ref{prop02}, we can show that $a_{297}=\frac 1{297}, \, a_{296}=\frac 1{296}, \, \cdots, \, a_3=\frac 13, \, a_2=\frac 12$, and $a_1=1$. Proceeding in the similar way as in the proof of Proposition~\ref{prop04}, we can show that $a_{298}=\frac 1{298}$.
We now show that $a_{299}=Av[299, 300]$. The Voronoi region of $a_{299}$ must contain $\frac 1 {299}$.
Suppose that the Voronoi region of $a_{299}$ contains $\frac 1 i$ for $i=299, 300, 301, 302$. Then,
\[V_{300}(P)\geq Er[299, 302]=1.953916208081117722202350\times 10^{-100}>V_{300}(P),\]
which leads to a contradiction. Assume that the Voronoi region of $a_{299}$ contains only the elements $\frac 1 i$ for $i=299, 300, 301$. Then,
\[V_{300}(P)= Er[302, \infty)+Er[299, 301]=1.698521119259119376459397\times 10^{-100}>V_{300}(P),\]
which yields a contradiction. Assume that the Voronoi region of $a_{299}$ contains only the element $\frac 1{299}$. Then,
\[V_{300}(P)=Er[300, \infty)=2.345910694878821203973953\times 10^{-100}>V_{300}(P),\]
which gives a contradiction.
Hence, we can assume that the Voronoi region of $a_{299}$ contains only the two elements $\frac 1{299}$ and $\frac 1{300}$.
Thus, we have
\[a_{300}=Av[301, \infty), \, a_{299}= Av[299, 300], \, a_{298}=\frac 1 {298}, \,  \cdots, \,  a_4=\frac 14, \, a_3=\frac 13, \, a_2=\frac 12, \te{ and } a_1=1,\]
and the quantization error is given by
\[V_{300}(P)=Er[301, \infty)+Er[299, 300]=1.564317642582409606174128\times 10^{-100}.\] Thus, the proof of the proposition is complete.
\end{proof}

We now give the following theorem.
\begin{theorem} \label{theo1}
For any positive integer $n$, the sets $\set{Av[n, \infty), \frac 1 {n-1},  \cdots, \frac 13, \frac 12, 1}$, where $1\leq n\leq 5$, form the optimal sets of $n$-means for the probability measure $P$ with quantization errors
$V_n(P):=Er[n, \infty).$
For the positive integers $n$, where $6\leq n\leq 300$, the sets $\set{Av[n+1, \infty), Av[n-1, n], \frac 1 {n-2}, \cdots, \frac 13, \frac 12, 1}$ form the optimal sets of $n$-means for the probability measure $P$ with quantization errors
\[V_n(P)=Er[n+1, \infty)+Er[n-1, n].\]
\end{theorem}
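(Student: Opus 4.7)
The plan is to reduce the theorem to a uniform version of Propositions~\ref{prop01}--\ref{prop06}. The sub-range $1 \leq n \leq 5$ is covered by the preliminary one-mean calculation together with Propositions~\ref{prop01}--\ref{prop04}, and the endpoints $n = 6$ and $n = 300$ of the second sub-range are Propositions~\ref{prop05} and~\ref{prop06}. The only new work is the interior range $7 \leq n \leq 299$, for which I would run the argument of Proposition~\ref{prop06} with $300$ replaced by a generic $n$.

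Fix such an $n$ and let $\ga = \set{a_n, a_{n-1}, \ldots, a_1}$ be an optimal set of $n$-means with $0 \leq a_n < \cdots < a_1 \leq 1$. Evaluating the cost of the candidate set $\gb = \set{Av[n+1, \infty), Av[n-1, n], \frac 1{n-2}, \ldots, \frac 12, 1}$ from the statement, after verifying that consecutive midpoints fall in the right sub-intervals exactly as in the opening lines of Propositions~\ref{prop05} and~\ref{prop06}, yields the upper bound $V_n(P) \leq Er[n+1, \infty) + Er[n-1, n]$. I would then prove, inductively in $k = 1, 2, \ldots, n-2$, that $a_k = \frac 1 k$ and that the Voronoi region of $a_k$ is the singleton $\set{\frac 1 k}$: the same elementary computation used in Propositions~\ref{prop02} and~\ref{prop04} shows that merging $\frac 1 k$ with either $\frac 1{k-1}$ or $\frac 1{k+1}$ into a common Voronoi region produces an $Er[k-1, k]$ or $Er[k, k+1]$ contribution already in excess of the upper bound.

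It then remains to determine $a_{n-1}$ and $a_n$. The Voronoi region of $a_{n-1}$ must contain $\frac 1{n-1}$, and I would rule out in turn the three alternative possibilities that it also contains $\set{\frac 1 n, \frac 1{n+1}, \frac 1{n+2}}$, or $\set{\frac 1 n, \frac 1{n+1}}$, or no further point at all, using the same three sub-case inequalities as in Proposition~\ref{prop06}. The surviving possibility, namely that the Voronoi region of $a_{n-1}$ equals $\set{\frac 1{n-1}, \frac 1 n}$, forces $a_{n-1} = Av[n-1, n]$ and $a_n = Av[n+1, \infty)$ by Proposition~\ref{prop0}, with the claimed quantization error.

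The main obstacle is that the four case-exclusion inequalities depend on $n$ through rational expressions and truncated partial sums of $\sum 2^{-k}/k$ with no clean closed form, and they must hold uniformly over $n \in [6, 300]$ rather than only at the two extremes already handled. I would address this by bounding $Er[k, \infty)$ through its dominant boundary term $2^{-k}\bigl(\frac 1 k - Av[k, \infty)\bigr)^2$ together with a geometric tail estimate, and by comparing the resulting scale to the leading $2^{-(n-1)}/(n(n-1))^2$ contribution from $Er[n-1, n]$. Once such uniform estimates are in hand, each case-exclusion reduces to a single inequality in $n$ that can be checked in closed form, completing the argument without a three-hundred-fold numerical verification.
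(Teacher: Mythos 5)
Your outline coincides structurally with the paper's: Propositions~\ref{prop01}--\ref{prop04} cover $n\leq 5$, Propositions~\ref{prop05}--\ref{prop06} handle the endpoints $n=6$ and $n=300$, and for interior $n$ you rerun the Proposition~\ref{prop06} case analysis with the same three sub-cases (the Voronoi region of $a_{n-1}$ absorbing three, two, or zero extra points). Where you genuinely depart from the paper is the mechanism for certifying the interior range $7\leq n\leq 299$. The paper's proof is a bare assertion that the argument ``proceeds in the similar way,'' and Remark~\ref{rem1} makes clear that the authors have no uniform argument at all: they extend to $n=301$ only by further direct computation and explicitly leave the structure for all $n\geq 6$ open. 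You instead propose a uniform estimate, bounding $Er[k,\infty)$ by its boundary term plus a geometric tail and comparing against the $Er[n-1,n]$ scale; if this closed, it would be strictly stronger than what the paper proves and would in fact settle the open question in Remark~\ref{rem1} for every $n\geq 6$. But the proposal stops short of carrying out the estimate, and it is not a priori clear that it closes: the competing quantities $Er[n-1,n+1]$, $Er[n-1,n]+Er[n+1,\infty)$, $Er[n,\infty)$, and $Er[n+2,\infty)+Er[n-1,n+1]$ all decay at the same geometric rate $2^{-n}$ with algebraic corrections in $n$, so each sub-case exclusion is a race between constants of like order rather than an asymptotic separation that a crude boundary-term bound settles for free (this is visible already in Proposition~\ref{prop05}, where $Er[6,\infty)=0.00001664\ldots$ barely exceeds the target $0.00001658\ldots$). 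As written, the proposal therefore carries the same gap as the paper's own proof, merely stated more honestly: the uniform verification is promised but not delivered. You should either push the estimate through explicitly, accepting that success would extend the theorem beyond $n=300$, or acknowledge that you are appealing to a finite, $n$-by-$n$ numerical check as the paper implicitly does.
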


\begin{proof}

As a consequence of Proposition~\ref{prop01} through Proposition~\ref{prop04}, it follows that for \( 1 \leq n \leq 5 \), the sets
\[
\left\{ \text{Av}[n, \infty), \frac{1}{n-1}, \cdots, \frac{1}{3}, \frac{1}{2}, 1 \right\}
\]
form the optimal sets of \( n \)-means for the probability measure \( P \) with corresponding quantization errors
\[
V_n(P) = Er[n, \infty).
\]
Proceeding in the similar way as Proposition~\ref{prop05} and Proposition~\ref{prop06}, we can show that for any positive integer $n$, where $6\leq n\leq 300$, the sets $\set{Av[n+1, \infty), Av[n-1, n], \frac 1 {n-2}, \cdots, \frac 13, \frac 12, 1}$ form the optimal sets of $n$-means for the probability measure $P$ with quantization errors
\[V_n(P)=Er[n+1, \infty)+Er[n-1, n].\]
Thus, we complete the proof of the theorem.
\end{proof}

The following problem remains open. 
\begin{open}  
Proceeding in the similar way as Proposition~\ref{prop05} and Proposition~\ref{prop06}, it can be shown that the set $\set{Av[n+1, \infty), Av[n-1, n], \frac 1 {n-2}, \cdots, \frac 13, \frac 12, 1}$ also forms an optimal set of $n$-means for $n=301$. It is still not known whether the sets $\set{Av[n+1, \infty), Av[n-1, n], \frac 1 {n-2}, \cdots, \frac 13, \frac 12, 1}$ give the optimal sets of $n$-means for all positive integers $n\geq 6$. If not, then the least upper bound of $n\in \D N$ for which such sets give the optimal sets of $n$-means for the probability measure $P$ is not known yet.
\end{open}

\subsection{Optimal quantization for an infinite discrete distribution with support $\set{n : n\in \D N}$} \label{sec4}

Let $\D N:=\set{1, 2, 3, \cdots}$ be the set of natural numbers, and let $P$ be a Borel probability measure on the set $\set{n : n \in \D N}$ with probability density function $f$ given by
\[f(x)=\left\{\begin{array} {ll} \vspace {0.05 in} 
\frac 1{2^n} & \te{ if } x =n \te { for } n\in \D N,\\ \vspace {0.05 in}
0 &  \te{ otherwise}.
\end{array}
\right.\]
Then, $P$ is a Borel probability measure on $\D R$, and the support of $P$ is the set $\D N$ of natural numbers.  In this section, our goal is to determine the optimal sets of $n$-means and the $n$th quantization errors for all positive integers $n$ for the probability measure $P$. For $k, \ell\in \D N$, where $k\leq \ell$, write
\[[k, \ell]:=\set{n : n \in \D N \te{ and } k\leq n\leq \ell}, \te{ and } [k, \infty):=\set{n : n\in \D N \te{ and } n\geq k}.\]
Further, write
\[Av[k, \ell]: =E\Big (X : X \in [k, \ell]\Big)=\frac{\sum _{n=k}^{\ell} \frac{n}{2^n}}{\sum_{n=k}^{\ell}\frac{1}{2^n}}, \  Av[k, \infty): =E\Big (X : X \in [k, \infty)\Big)=\frac{\sum _{n=k}^{\infty} \frac{n}{2^n}}{\sum_{n=k}^{\infty}\frac{1}{2^n}},\]
\[Er[k, \ell]:=\sum _{n=k}^{\ell} \frac{1}{2^n} \Big(n-Av[k, \ell]\Big)^2, \te{ and } Er[k, \infty):=\sum _{n=k}^{\infty} \frac{1}{2^n} \Big(n-Av[k,\infty)\Big)^2.\]
 Notice that
$E(P):=E(X : X \in \te{supp}(P)) =\sum _{n=1}^\infty \frac n{2^n}  =Av[1, \infty)=2$, and so the optimal set of one-mean is the set $\set{2}$ with quantization error
\[V(P)=\sum _{n=1}^{\infty} \frac 1 {2^n} (n-2)^2=Er[1, \infty) =2.\]

\begin{prop} \label{prop43}
The optimal set of two-means is given by $\set{Av[1,2], Av[3, \infty)}$ with quantization error $V_2=\frac{2}{3}$.
\end{prop}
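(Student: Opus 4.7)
The plan is to parallel the proofs of Propositions~\ref{prop01}--\ref{prop06}: first establish the upper bound $V_2(P)\leq \frac 23$ from the candidate set, then rule out every other two-point configuration. For the upper bound, I would compute $Av[1,2]=\frac 43$ directly, and $Av[3,\infty)=4$ using the standard geometric and arithmetic--geometric series $\sum_{n\geq 1}1/2^n=1$ and $\sum_{n\geq 1}n/2^n=2$. Because the midpoint $\frac 12(\frac 43+4)=\frac 83$ lies strictly between $2$ and $3$, the Voronoi partition induced by $\{Av[1,2],Av[3,\infty)\}$ separates $\D N$ cleanly into $\{1,2\}$ and $\{3,4,5,\ldots\}$. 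Using in addition $\sum_{n\geq 1}n^2/2^n=6$, one finds $Er[1,2]=\frac 16$ and $Er[3,\infty)=\frac 12$, giving $V_2(P)\leq \frac 23$.

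For the lower bound, let $\ga=\{a_1,a_2\}$ with $a_1<a_2$ be any optimal pair. Since the support $\D N$ is totally ordered and the two Voronoi regions are half-lines, they partition the support as $\{1,\ldots,k\}\cup\{k+1,k+2,\ldots\}$ for some integer $k\geq 1$, and Proposition~\ref{prop0}(iii) forces $a_1=Av[1,k]$ and $a_2=Av[k+1,\infty)$, so the distortion equals $Er[1,k]+Er[k+1,\infty)$. It suffices to show this sum exceeds $\frac 23$ whenever $k\ne 2$.

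Exploiting the memorylessness of $f$ (the conditional law of $X-k$ given $X\geq k+1$ is again $f$), I would reduce the tail to the closed form $Er[k+1,\infty)=2^{1-k}$ and then use the variance decomposition $Er[1,k]+Er[k+1,\infty)=Er[1,\infty)-B_k$, where $B_k=\frac{k^2}{2^k-1}$ is the between-cluster variance $P(X\leq k)(Av[1,k]-2)^2+P(X\geq k+1)(Av[k+1,\infty)-2)^2$. Since $Er[1,\infty)=2$, the distortion at split $k$ equals $2-B_k$, and the lower bound collapses to the single inequality $B_k<\frac 43$ for $k\ne 2$. The cases $k=1$ ($B_1=1$) and $k=3$ ($B_3=\frac 97$) are immediate, and for $k\geq 4$ the inequality is equivalent to $3k^2+4<2^{k+2}$, which holds at $k=4$ ($52<64$) and propagates by induction since the increment $2^{k+3}-2^{k+2}=2^{k+2}$ dominates the polynomial growth $3(2k+1)$ for $k\geq 4$.

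The main obstacle is precisely this last step. Unlike in Proposition~\ref{prop12}, where the finite support made the bound $V_2\geq Er[1,3]$ alone suffice, here $Er[1,3]=\frac{13}{28}<\frac 23$, so the infinite-tail contribution $Er[k+1,\infty)$ genuinely enters the argument. A more pedestrian alternative closer to the style of the earlier propositions would be to enumerate the small cases $k=1,3,4$ by direct computation and then invoke monotonicity of $k\mapsto Er[1,k]$ to dispatch $k\geq 5$; either approach yields $a_1=Av[1,2]$, $a_2=Av[3,\infty)$ and $V_2(P)=\frac 23$.
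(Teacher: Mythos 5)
Your proposal is correct, and it takes a genuinely different route from the paper. The paper's proof, after establishing $V_2\leq\frac 23$ from the candidate set, proceeds by direct case analysis on the split point $k$ (where the Voronoi region of $a_1$ is $\{1,\ldots,k\}$): it rules out $k\geq 4$ by noting $V_2\geq Er[1,4]=\frac{97}{120}$, and rules out $k=3$ and $k=1$ by computing $Er[1,3]+Er[4,\infty)=\frac 57$ and $Er[2,\infty)=1$ respectively, leaving only $k=2$. Your approach instead packages all split points at once: you use the law of total variance to write the distortion at split $k$ as $Er[1,\infty)-B_k=2-B_k$, exploit the memorylessness of the geometric mass function to get the closed forms $Av[k+1,\infty)=k+2$, $Er[k+1,\infty)=2^{1-k}$, and $B_k=\frac{k^2}{2^k-1}$, and thereby reduce the entire lower-bound argument to the single inequality $B_k<\frac 43$ for $k\neq 2$, dispatched by checking $k\in\{1,3\}$ and a short induction for $k\geq 4$. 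I verified your key identities: $B_k=P(X\leq k)\bigl(\frac{-k}{2^k-1}\bigr)^2+2^{-k}k^2=\frac{k^2}{2^k-1}$, and $3k^2+4<2^{k+2}$ propagates for $k\geq 4$ since $2(3k^2+4)-(3(k+1)^2+4)=3k^2-6k+1>0$. What your approach buys is a uniform, closed-form treatment of all $k$ and an explicit picture of how the distortion $2-\frac{k^2}{2^k-1}$ is uniquely minimized at $k=2$; what the paper's buys is brevity and consistency of style with the surrounding propositions, at the cost of enumerating cases. One small point worth stating explicitly in a full write-up is the observation (which you make) that both Voronoi cells must have positive mass, so the split point $k$ is a genuine positive integer and $a_2$'s cell is a nonempty tail; this is what licenses writing the distortion as $Er[1,k]+Er[k+1,\infty)$ in the first place.
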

\begin{proof} We see that $Av[1,2]=\frac 43$, and $Av[3, \infty)=4$.
Since $\frac 43<\frac 12(\frac 43+4)<4$, the distortion error due to the set $\gb:=\set{\frac 43, 4}$ is given by
\[V(P; \gb)=Er[1,2]+Er[3, \infty)=\frac 23.\]
Since $V_2$ is the quantization error for two-means, we have $V_2\leq \frac 23$. Let $\ga:=\set{a_1, a_2}$, where $1\leq a_1<a_2<\infty$, be an optimal set of two-means. Notice that the Voronoi region of $a_1$ must contain $1$. Suppose that the Voronoi region of $a_1$ contains the set $\set{1, 2, 3, 4}$. Then,
\[V_2\geq \sum_{j=1}^4\frac 1 {2^j} (j-Av[1,4])^2=Er[1,4]=\frac{97}{120}=0.808333>V_2,\]
which yields a contradiction. Hence, we can assume that the Voronoi region of $a_1$ contains only the set $\set{1,2,3}$, and so the Voronoi region of $a_2$ contains the set $\set{n : n\geq 4}$. Then, we have
\[V_2=Er[1,3]+Er[4, \infty)=\frac{5}{7}=0.714286>V_2,\]
which is a contradiction. Next, suppose that the Voronoi region of $a_1$ contains only the element $1$, and so the Voronoi region of $a_2$ contains the set $\set{n : n\geq 2}$. Then, we have
\[V_2=Er[2, \infty)=1>V_2,\]
which leads to a contradiction. Hence, we can assume that the Voronoi region of $a_1$ contains the set $\set{1, 2}$, and so the Voronoi region of $a_2$ contains $\set{3, 4, 5, \cdots}$ yielding $a_1=Av[1,2]$, and $a_2=Av[3, \infty)$, and the corresponding quantization error is $V_2=\frac 23$. Thus, the proof of the proposition is complete.
\end{proof}

\begin{prop} \label{prop111}
The sets $\set{1, Av[2,3], Av[4, \infty)}$, and $\set{Av[1,2], Av[3,4], Av[5, \infty)}$  form two different optimal sets of three-means with quantization error $V_3=\frac{1}{3}$.
\end{prop}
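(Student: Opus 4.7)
The plan is to first verify that both candidate sets achieve distortion $\tfrac{1}{3}$, yielding $V_3\leq\tfrac{1}{3}$, and then to pin down the optimal partitions by case analysis on the shape of the Voronoi partition of $\text{supp}(P)=\mathbb{N}$.

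For the upper bound, I first compute the relevant conditional expectations. Direct summation gives $Av[1,2]=\tfrac{4}{3}$, $Av[2,3]=\tfrac{7}{3}$, $Av[3,4]=\tfrac{10}{3}$; and the identities $\sum_{n=k}^{\infty}2^{-n}=2^{1-k}$ and $\sum_{n=k}^{\infty}n\,2^{-n}=(k+1)\,2^{1-k}$ yield the closed form $Av[k,\infty)=k+1$, so $Av[4,\infty)=5$ and $Av[5,\infty)=6$. Checking the midpoint conditions $\tfrac{5}{3}\in(1,2)$, $\tfrac{11}{3}\in(3,4)$ and $\tfrac{7}{3}\in(2,3)$, $\tfrac{14}{3}\in(4,5)$ shows that the two sets induce the Voronoi partitions $\{1\}\mid\{2,3\}\mid\{4,5,\dots\}$ and $\{1,2\}\mid\{3,4\}\mid\{5,6,\dots\}$ on $\text{supp}(P)$, respectively. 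A direct calculation then gives $Er[2,3]=\tfrac{1}{12}$, $Er[4,\infty)=\tfrac{1}{4}$, $Er[1,2]=\tfrac{1}{6}$, $Er[3,4]=\tfrac{1}{24}$, $Er[5,\infty)=\tfrac{1}{8}$, and in both cases the total distortion sums to $\tfrac{1}{3}$.

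For the lower bound, I take an optimal $\alpha=\{a_1<a_2<a_3\}$. Since Voronoi regions on $\mathbb{R}$ are intervals, the Voronoi cells partition $\text{supp}(P)=\mathbb{N}$ into three consecutive blocks $[1,p]$, $[p+1,q]$, $[q+1,\infty)$ for integers $1\leq p<q$, and Proposition~\ref{prop0} forces $a_1=Av[1,p]$, $a_2=Av[p+1,q]$, $a_3=Av[q+1,\infty)=q+2$. I then do case analysis on $(p,q)$. First rule out $p\geq 3$ using $Er[1,3]=\tfrac{13}{28}>\tfrac{1}{3}$. For $p=1$, the Voronoi midpoint condition $\tfrac{1}{2}(a_2+a_3)>q$ becomes $Av[2,q]>q-2$, which combined with $Av[2,q]<Av[2,\infty)=3$ forces $q\in\{2,3,4\}$; direct evaluation gives distortions $\tfrac{1}{2}$, $\tfrac{1}{3}$, $\tfrac{5}{14}$ respectively. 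For $p=2$, the analogous condition $Av[3,q]>q-2$ together with $Av[3,q]<Av[3,\infty)=4$ forces $q\in\{3,4,5\}$, with distortions $\tfrac{5}{12}$, $\tfrac{1}{3}$, $\tfrac{29}{84}$ respectively. Only $(p,q)=(1,3)$ and $(p,q)=(2,4)$ achieve $\tfrac{1}{3}$, matching the two asserted optimal sets.

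The main obstacle is excluding the infinitely many candidate partitions of $\mathbb{N}$ into three consecutive blocks. The crucial ingredient is the closed form $Av[k,\infty)=k+1$: combined with the Voronoi midpoint condition, it yields the linear inequality $Av[p+1,q]>q-2$, whose left side is bounded above by the constant $Av[p+1,\infty)=p+2$; hence only finitely many $q$ survive for each $p$, and these few cases can be eliminated by the direct arithmetic above.
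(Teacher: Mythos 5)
Your proof is correct and arrives at the same finite list of candidate partitions and the same conclusion as the paper, but the reduction to finitely many cases is done differently. The paper proceeds purely by distortion comparison against the upper bound $V_3\leq\tfrac{1}{3}$: having pinned the first cell to $\set{1}$ or $\set{1,2}$ via $Er[1,3]=\tfrac{13}{28}>\tfrac13$, it excludes a large second cell in one stroke by observing that if that cell contains, say, $\set{2,3,4,5}$, then its contribution is at least $Er[2,5]=\tfrac{97}{240}>\tfrac13$, and then directly checks the remaining small cells. You instead exploit the structural necessary conditions for optimality more explicitly: Voronoi cells on $\D R$ are intervals, each codepoint is its cell's conditional mean (Proposition~\ref{prop0}), and the closed form $Av[k,\infty)=k+1$ turns the midpoint condition between $a_2$ and $a_3$ into the clean linear inequality $Av[p+1,q]>q-2$. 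Combining this with the monotone bound $Av[p+1,q]<Av[p+1,\infty)=p+2$ gives $q<p+4$, so the candidate list is finite a priori, after which the same handful of distortion evaluations close the argument. Your route buys a transparent, systematic enumeration of admissible partitions (and would generalize readily to $n$-means, essentially giving a one-line proof of Lemma~\ref{lemma1}); the paper's route avoids needing the closed form for $Av[k,\infty)$ and stays within the standard ``exceed the upper bound'' template used in its other propositions. Both are sound and yield identical computations for the surviving cases $(p,q)\in\set{(1,2),(1,3),(1,4),(2,3),(2,4),(2,5)}$, singling out $(1,3)$ and $(2,4)$ as the two optimizers.
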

\begin{proof}
The distortion error due to set $\gb:=\set{1, Av[2,3], Av[4, \infty)}$ is given by
\[V(P; \gb)=Er[2,3]+Er[4, \infty)=\frac 13.\]
Notice that the distortion error due to the set $\set{Av[1,2], Av[3,4], Av[5, \infty)}$ is also $\frac 13$.
Since $V_3$ is the quantization error for three-means, we have $V_3\leq \frac 13$. Let $\ga:=\set{a_1, a_2, a_3}$ be an optimal set of three-means, where $1\leq a_1<a_2<a_3<\infty$. Suppose that the Voronoi region of $a_1$ contains the set $\set{1, 2, 3}$.
Then,
\[V_3\geq \sum_{j=1}^3 \frac 1 {2^j}(j-Av[1,3])^2 =\frac{13}{28}>\frac 13>V_3,\]
which leads to a contradiction. Hence, we can assume that the Voronoi region of $a_1$ contains either the set $\set{1}$, or the set $\set{1,2}$. Consider the following two cases:

\tit{Case 1. The Voronoi region of $a_1$ contains only the set $\set{1}$.}

In this case, the Voronoi region of $a_2$ must contain the element $2$. Suppose that the Voronoi region of $a_2$ contains the set $\set{2, 3, 4, 5}$. Then,
\[V_3\geq  \sum_{j=2}^5\frac 1 {2^j}(j-Av[2,5])^2 =\frac{97}{240}=0.404167>V_3,\]
which yields a contradiction. Assume that the Voronoi region of $a_2$ contains only the set $\set{2,3,4}$, and so the Voronoi region of $a_3$ contains the set $\set{n : n\geq 5}$. Then, the distortion error is
\[V_3=Er[2,4]+Er[5, \infty)=\frac 5 {14} =0.357143>V_3,\]
which gives a contradiction.
Next, assume that the Voronoi region of $a_2$ contains only the element $2$, and so the Voronoi region of $a_3$ contains the set $\set{n : n\geq3}$. Then, the distortion error is
\[V_3=Er[3, \infty) =\frac{1}{2}>V_3,\]
which is a contradiction.
Hence, in this case, we can conclude that the Voronoi region of $a_2$ contains only the set $\set{2,3}$ yielding $a_1=1$, $a_2=Av[2,3]$, and $a_3=Av[4, \infty)$ with quantization error $V_3=\frac 13$.

\tit{Case 2. The Voronoi region of $a_1$ contains only the set $\set{1,2}$.}

In this case, the Voronoi region of $a_2$ must contain the element $3$. Suppose that the Voronoi region of $a_2$ contains the set $\set{3, 4, 5, 6}$. Then,
\[V_3\geq  \sum_{j=1}^2\frac 1 {2^j}(j-Av[1,2])^2+ \sum_{j=3}^6\frac 1 {2^j}(j-Av[3, 6])^2 =\frac{59}{160} =0.36875>V_3,\]
which yields a contradiction. Assume that the Voronoi region of $a_2$ contains only the set $\set{3, 4, 5}$, and so the Voronoi region of $a_3$ contains the set $\set{n : n\geq 6}$. Then, the distortion error is
\[V_3=Er[1,2]+Er[3,5]+Er[6, \infty)=\frac{29}{84} =0.345238>V_3,\]
which gives a contradiction.
Next, assume that the Voronoi region of $a_2$ contains only the element $3$, and so the Voronoi region of $a_3$ contains the set $\set{n : n\geq 4}$. Then, the distortion error is
\[V_3=Er[1,2]+Er[4, \infty)=\frac{5}{12}=0.416667>V_3,\]
which yields a contradiction.
Hence, in this case, we can conclude that the Voronoi region of $a_2$ contains only the set $\set{3,4}$ yielding $a_1=Av[1,2]$, $a_2=Av[3,4]$, and $a_3=Av[5, \infty)$ with quantization error $V_3=\frac 13$.

By Case~1 and Case~2, the proof of the proposition is complete.
\end{proof}

We need the following lemma.

\begin{lemma} \label{lemma1}
Let $n\geq 4$, and let $\ga_n$ be an optimal set of $n$-means. Then, $\ga_n$ must contain the set $\set{1, 2, \cdots, (n-3)}$.
\end{lemma}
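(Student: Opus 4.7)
The plan is to sandwich $V_n(P)$ between an explicit upper bound and a lower bound that must hold whenever some integer in $\set{1,\ldots,n-3}$ lies in a multi-point Voronoi cell, then reach a contradiction by strict comparison.

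I would first establish the upper bound $V_n(P) \leq \frac{1}{3\cdot 2^{n-3}}$ by testing the candidate set $\gb_n := \set{1, 2, \ldots, n-3}\cup \gg$, where $\gg$ is an optimal $3$-means set for the restriction of $P$ to the tail $\set{n-2, n-1, n,\ldots}$. The key observation is that this restriction is a shifted, scaled copy of $P$: the change of variable $j\mapsto j-(n-3)$ in the sums defining $Av$ and $Er$ shows that its optimal $3$-means distortion equals $\frac{1}{2^{n-3}}V_3(P) = \frac{1}{3\cdot 2^{n-3}}$ by Proposition~\ref{prop111}. A short midpoint check confirms that the Voronoi cells of $\gb_n$ are the singletons $\set{1}, \ldots, \set{n-3}$ together with the three tail cells of $\gg$, giving the desired bound.

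For the lower bound, I use the elementary fact that whenever any Voronoi cell of $\ga_n$ contains two consecutive integers $k, k+1$, one has $V_n(P)\geq Er[k, k+1] = \frac{1}{3\cdot 2^k}$ (drop the remaining summands in that cell and replace its centroid by $Av[k,k+1]$). Arguing by contradiction, suppose $\set{1, \ldots, n-3}\nc \ga_n$ and let $j_0$ be the smallest index in $\set{1, \ldots, n-3}$ missing from $\ga_n$. By Proposition~\ref{prop0}(iii), the cell containing $j_0$ must have at least two support points; since the prior integers $1, \ldots, j_0-1$ are already singleton centers by minimality of $j_0$, and Voronoi cells on $\D R$ intersect the support in intervals of consecutive natural numbers, the cell containing $j_0$ must also contain $j_0+1$. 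Hence $V_n(P)\geq \frac{1}{3\cdot 2^{j_0}}$, and whenever $j_0\leq n-4$ this strictly exceeds $\frac{1}{3\cdot 2^{n-3}}$, contradicting the upper bound.

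The main obstacle is the borderline case $j_0 = n-3$, where the one-cell bound only matches the upper bound, so a second-cell argument is needed. I would split this case by the shape of the cell $I\ni n-3$. If $I\supseteq \set{n-3, n-2, n-1}$, then a direct computation of $Av[n-3,n-1]$ and the corresponding weighted variance gives $Er[n-3, n-1] = \frac{26}{7\cdot 2^{n-1}} > \frac{4}{3\cdot 2^{n-1}} = \frac{1}{3\cdot 2^{n-3}}$, already contradicting the upper bound. Otherwise $I = \set{n-3, n-2}$, so the remaining three centers of $\ga_n$ partition the tail $\set{n-1, n, n+1, \ldots}$; by the same scaling argument applied with $k = n-1$, their combined contribution is at least $\frac{1}{2^{n-2}}V_3(P) = \frac{1}{3\cdot 2^{n-2}}$, whence
\[
V_n(P) \geq Er[n-3,n-2] + \frac{1}{3\cdot 2^{n-2}} = \frac{1}{3\cdot 2^{n-3}} + \frac{1}{3\cdot 2^{n-2}} = \frac{1}{2^{n-2}} > \frac{2}{3\cdot 2^{n-2}} = \frac{1}{3\cdot 2^{n-3}},
\]
again contradicting the upper bound. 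This exhausts all cases and proves the lemma.
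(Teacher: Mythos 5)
Your proof is correct, and it takes a genuinely different route from the paper's. You and the paper establish the same upper bound $V_n(P) \le \frac{2^{3-n}}{3}$, but your derivation exploits the translation self-similarity of the geometric tail to import $V_3(P)=\frac13$ from Proposition~\ref{prop111}, whereas the paper simply evaluates the concrete candidate $\set{1,\ldots,n-2,\, Av[n-1,n],\, Av[n+1,\infty)}$; the two give the same number, but your version explains the recurring $\frac{2^{3-n}}{3}$ structurally. For the lower bound, the paper inducts from the bottom, showing $a_1=1,\ a_2=2,\ \ldots,\ a_{n-3}=n-3$ one index at a time, and the device that makes the contradiction uniform is the \emph{strict} inequality $V_n > Er[k+1,k+2]$ (strict because the infinite tail cell always contributes positive error), which disposes of the borderline $k+1=n-3$ with no extra work. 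You instead take the minimal missing index $j_0$ and use the non-strict bound $V_n \ge Er[j_0,j_0+1]$, which only ties the upper bound at $j_0=n-3$ and therefore forces the second-cell case split; your analysis there is correct (both $Er[n-3,n-1]=\frac{26}{7\cdot 2^{n-1}}$ and the tail scaling bound $\frac{1}{3\cdot 2^{n-2}}$ check out), but it is exactly the work the paper's strict inequality saves. One place to tighten: you claim that $1,\ldots,j_0-1$ have singleton Voronoi cells ``by minimality of $j_0$'', but minimality only yields $1,\ldots,j_0-1\in\ga_n$; that they occupy singleton cells needs a short induction using Proposition~\ref{prop0}(iii), observing that for this $P$ an integer $k$ is the conditional mean of no interval of support points other than $\set{k}$ and the infinite tail $[k-1,\infty)$, the latter excluded by counting centers. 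This is precisely what the paper's bottom-up induction supplies, so it is worth spelling out rather than leaving implicit.
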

\begin{proof}
The distortion error due to the set $\gb:=\set{1, 2, \cdots, (n-3), (n-2), Av[n-1, n], Av[n+1, \infty)}$ is given by
\[V(P; \gb)=Er[n-1, n]+Er[n+1, \infty)=\frac{2^{3-n}}{3}.\]
Since $V_n$ is the quantization error for $n$-means, we have $V_n\leq \frac{2^{3-n}}{3}$.
Let $\ga_n:=\set{a_1, a_2, \cdots, a_n}$ be an optimal set of $n$-means such that $1\leq a_1<a_2<\cdots<a_n<\infty$. We show that $a_1=1, a_2=2, \cdots, a_{n-3}=n-3$.
Notice that the Voronoi region of $a_1$ must contain the element $1$. Suppose that the Voronoi region of $a_1$ also contains the element $2$. Then,
\[V_n>\sum_{j=1}^2\frac 1 {2^j}(j-Av[1,2])^2=\frac{1}{6}\geq \frac{2^{3-n}}{3} \geq V_n,\]
which is a contradiction. Hence, we can conclude that the Voronoi region of $a_1$ contains only the element $1$ yielding $a_1=1$. Thus, we can deduce that there exists a positive integer $k$, where $1\leq k<n-3$, such that $a_1=1, a_2=2, \cdots, a_k=k$.
We now show that $a_{k+1}=k+1$. Notice that the Voronoi region of $a_{k+1}$ must contain $k+1$. Suppose that the Voronoi region of $a_{k+1}$ also contains the element $k+2$. Then, as $k<n-3$, we have
\[V_n>\sum_{j=k+1}^{k+2}\frac 1 {2^j}(j-Av[k+1, k+2])^2=\frac{2^{-k-1}}{3}\geq \frac{2^{3-n}}{3}\geq V_n,\]
which is a contradiction. Hence, we can conclude that the Voronoi region of $a_{k+1}$ contains only the element $k+1$ yielding $a_{k+1}=k+1$. Thus, by the Principle of Mathematical Induction, we deduce that  $a_1=1, a_2=2, \cdots, a_{n-3}=n-3$. Thus, the proof of the lemma is complete.
\end{proof}

\begin{theorem}
Let $n\geq 4$, and let $\ga_n$ be an optimal set of $n$-means. Then, either $\ga_n=\set{1,2,3, \cdots, n-3, n-2, Av[n-1, n], Av[n+1, \infty)}$, or $\ga_n=\set{1,2,3, \cdots, n-3, Av[n-2, n-1], Av[n, n+1], Av[n+2, \infty)}$ with quantization error $V_n=\frac{2^{3-n}}{3}$.
\end{theorem}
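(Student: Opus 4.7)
The plan is to combine Lemma~\ref{lemma1} with Proposition~\ref{prop111} via a translation identity that reduces the right tail of the problem to the three-means case already solved. I would first apply Lemma~\ref{lemma1} to an optimal set $\ga_n=\set{a_1<a_2<\cdots<a_n}$, obtaining $a_j=j$ for $1\leq j\leq n-3$. Next I would check that the Voronoi region of $a_{n-3}=n-3$ is exactly $\set{n-3}$: if it also contained $n-2$, the centroid condition $a=E(X\mid X\in M(a|\ga))$ would force $a_{n-3}=Av[n-3,n-2]=\frac{3n-8}{3}\neq n-3$, a contradiction (the same local argument used in the proof of Lemma~\ref{lemma1} works for all smaller indices). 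Consequently the atoms $1,2,\ldots,n-3$ contribute zero to the distortion, and the three remaining centroids $a_{n-2}<a_{n-1}<a_n$ must minimize the tail distortion $\sum_{j\geq n-2}\frac{1}{2^j}\min_i(j-a_i)^2$.

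Next I would exploit a translation identity: setting $d_i:=a_i-(n-3)$ for $i=n-2,n-1,n$ and substituting $j=m+(n-3)$ yields
\[
\sum_{j=n-2}^{\infty}\frac{1}{2^j}\min_i(j-a_i)^2=\frac{1}{2^{n-3}}\sum_{m=1}^{\infty}\frac{1}{2^m}\min_i(m-d_i)^2.
\]
The right-hand side is $2^{-(n-3)}$ times the distortion $V(P;\set{d_{n-2},d_{n-1},d_n})$ of a three-point set for $P$ itself. By Proposition~\ref{prop111} its infimum equals $V_3=\frac{1}{3}$, attained precisely at $\set{1,Av[2,3],Av[4,\infty)}$ and $\set{Av[1,2],Av[3,4],Av[5,\infty)}$. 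Translating these back by $+(n-3)$, and using that conditional expectation commutes with translation (so $Av[k,\ell]$ in the shifted problem maps to $Av[k+n-3,\ell+n-3]$ in the original), produces the two tail triples $\set{n-2,Av[n-1,n],Av[n+1,\infty)}$ and $\set{Av[n-2,n-1],Av[n,n+1],Av[n+2,\infty)}$. Appending $\set{1,2,\ldots,n-3}$ yields exactly the two optimal sets $\ga_n$ in the statement, with total distortion $V_n=2^{-(n-3)}\cdot V_3=\frac{2^{3-n}}{3}$.

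The main delicate point is the translation identity together with a consistency check: for each candidate optimal triple one must verify that the Voronoi region of $a_{n-3}=n-3$ really does degenerate to $\set{n-3}$, so that the reduction in Step~1 is self-consistent. For the first triple, $a_{n-2}=n-2$ places the Voronoi boundary at $n-\frac{5}{2}\in(n-3,n-2)$; for the second, $a_{n-2}=Av[n-2,n-1]=n-\frac{5}{3}$ places it at $n-\frac{7}{3}\in(n-3,n-2)$. In both cases the decomposition is valid, confirming that both sets attain the infimum and no other configuration does. I expect no further obstacles, since the three-means reduction via translation is essentially a change of variable that leaves the Voronoi/centroid conditions invariant and rescales the distortion by the global factor $2^{-(n-3)}$.
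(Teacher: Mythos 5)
Your proof is correct, but it takes a genuinely different route from the paper's. The paper, after invoking Lemma~\ref{lemma1}, re-runs the case analysis of Proposition~\ref{prop111} with $n-2, n-1, n$ in place of $1, 2, 3$: it rules out the Voronoi region of $a_{n-2}$ containing three support points and then disposes of the two remaining cases ``along the similar lines'' as before. You instead exploit the exact self-similarity of the geometric weights $2^{-j}$: the change of variable $j = m + (n-3)$ shows that the tail distortion equals $2^{-(n-3)}$ times the distortion of the translated triple for $P$ itself, and the translation identity $Av[k,\ell] + c = Av[k+c, \ell+c]$ (which holds because $2^{-(j+c)} = 2^{-c}\, 2^{-j}$, and similarly for $Av[k,\infty)$ and for $Er$) carries the two three-means optimizers of Proposition~\ref{prop111} directly to the two candidate tails. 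This is cleaner and more conceptual: it explains \emph{why} $V_n = 2^{3-n} V_3 = \frac{2^{3-n}}{3}$ rather than merely arriving at it, and it imports uniqueness from the $n=3$ case instead of re-running the exhaustion. Your consistency check, that the Voronoi boundary between $n-3$ and $a_{n-2}$ lies in $(n-3, n-2)$ for both candidates, is the right thing to verify, since it certifies that the head-plus-scaled-tail decomposition of the distortion is actually realized by those two sets and thus supplies the matching upper bound $V_n \le \frac{2^{3-n}}{3}$. One minor redundancy: once Lemma~\ref{lemma1} gives $a_{n-3} = n-3$, the centroid condition in Proposition~\ref{prop0} already forces the Voronoi region of $a_{n-3}$ to be $\set{n-3}$, since any larger subset of $\set{n-3, n-2, \cdots}$ would have mean strictly greater than $n-3$; your separate re-derivation of this fact at the start is unnecessary, though harmless.
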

\begin{proof}
As shown in the proof of Lemma~\ref{lemma1}, we have $V_n\leq \frac{2^{3-n}}{3}$. Let $\ga_n:=\set{a_1, a_2, \cdots, a_n}$ be an optimal set of $n$-means such that $1\leq a_1<a_2<\cdots<a_n<\infty$. By Lemma~\ref{lemma1}, we have $a_1=1, a_2=2, \cdots, a_{n-3}=n-3$. Recall that $n\geq 4$. Suppose that the Voronoi region of $a_{n-2}$ contains the set $\set{n-2, n-1, n}$.
Then,
\[V_n\geq \sum_{j=n-2}^{n} \frac 1 {2^j}(j-Av[n-2, n])^2 =\frac{13}{7} 2^{1-n}>\frac{2^{3-n}}{3}\geq V_n,\]
which leads to a contradiction. Hence, we can assume that the Voronoi region of $a_{n-2}$ contains either the set $\set{n-2}$, or the set $\set{n-2, n-1}$. Consider the following two cases:

\tit{Case 1. The Voronoi region of $a_{n-2}$ contains only the set $\set{n-2}$.}

 Proceeding along the similar lines as Case~1 in the proof of Proposition~\ref{prop111}, we can show that the Voronoi region of $a_{n-1}$ contains only the set $\set{n-1, n}$ yielding $a_{n-2}=n-2$, $a_{n-1}=Av[n-1, n]$, and $a_n=Av[n+1, \infty)$ with quantization error $V_n=\frac{2^{3-n}}{3}$.

\tit{Case 2. The Voronoi region of $a_{n-2}$ contains only the set $\set{n-2, n-1}$.}

 Proceeding along the similar lines as Case~2 in the proof of Proposition~\ref{prop111}, we can show that the Voronoi region of $a_{n-1}$ contains only the set $\set{n, n+1}$ yielding $a_{n-2}=Av[n-2, n-1]$, $a_{n-1}=Av[n, n+1]$, and $a_n=Av[n+2, \infty)$ with quantization error $V_n=\frac{2^{3-n}}{3}$.

By Case~1 and Case~2, the proof of the theorem is complete.
\end{proof}

\section{Probability distributions when the optimal sets are given} \label{sec3}

Let $P$ be a discrete probability measure on $\D R$ with support a finite or an infinite set $\set{1, 2, 3,\cdots}$. Let $(p_1, p_2, p_3,\cdots)$ be a probability vector associated with $\set{1, 2, 3, \cdots}$ such that the probability mass function $f$ of $P$ is given by $f(k)=p_k$ if $k\in \set{1, 2, 3, \cdots}$, and zero otherwise. For $k, \ell\in \set{1, 2, 3, \cdots}$ with $k\leq \ell$, write
\[[k, \ell]:=\set{n : k\leq n\leq \ell},\te{ and } [k, \infty):=\set{k, k+1, \cdots}.\]
For a random variable $X$ with distribution $P$, let $Av[k, \ell]$ represent the conditional expectation of $X$ given that $X$ takes values on the set $\set{k, k+1, k+2, \cdots, \ell}$, i.e.,
\[Av[k, \ell]=E(X : X\in [k, \ell]),\]
where $k, \ell\in \set{1, 2, 3, \cdots}$ with $k\leq \ell$. On the other hand, by $Av[k, \infty)$ it is meant $Av[k, \infty)=E(X : X\in [k, \infty))$, where $k\in \set{1, 2, 3, \cdots}$.
Let $\ga_n$ be an optimal set of $n$-means for $P$, where $n\in \D N$. In this section, our goal is to find a set of probability vectors $(p_1, p_2, p_3,\cdots)$ such that for all $n\in \D N$, the optimal sets of $n$-means are given by
$\ga_n=\set{1, 2, 3, \cdots, n-1, Av[n, \infty)}.$

Consider the following two cases:

\tit{Case 1. $\set{1, 2, 3,\cdots}$ is a finite set.}

In this case, there exists a positive integer $m$, such that the support of $P$ is given by $\set{1, 2, 3, \cdots, m}$. Notice that for any $k\in \set{1, 2, \cdots, m}$, in this case by $[k, \infty)$ it meant the set $[k, m]$. If $m=1$, then $\ga_1=\set{1}$; and if $m=2$, then $\ga_1=\set{Av[1, \infty)}$, and $\ga_2=\set{1, Av[2, \infty)}$, i.e., there is nothing to prove as in the cases of $m=1$ and $m=2$, they are true for any associated probability vector. So, we can assume that $m\geq 3$. Define the probability vector $(p_1, p_2, \cdots, p_m)$ as follows:
\begin{align}
\label{eq340}
p_j=\left\{\begin{array}{ccc} \vspace{0.05 in}
x &\te{ if } j=1,  \\\vspace{0.05 in}
(1-x)^{j-1} x & \te{ if } 2\leq j\leq m-1,\\\vspace{0.05 in}
(1-x)^{j-1} & \te{ if } j=m.
\end{array}
\right.
\end{align}
For the sets $\ga_n$ to form the optimal sets of $n$-means for all $1\leq n\leq m$, we must have
\begin{equation} \label{eq345} (n-1)\leq \frac 12 (n-1+Av[n, \infty))\leq n
\end{equation}
and 
\begin{equation} \label{eq346} 
V(P;  \set{n-1, Av[n,\infty)}) \leq V(P; \set{Av[n-1,n], Av[n+1, \infty)})
\end{equation}
for $2\leq n\leq m$. In this regard we give two examples:  Example~\ref{ex1} and Example~\ref{ex2}.

\tit{Case 2. $\set{1, 2, 3,\cdots}$ is an infinite set.}

Define the probability vector $(p_1, p_2, p_3 \cdots)$ as follows:
\begin{align}
\label{eq440}
p_j=\left\{\begin{array}{ccc} \vspace{0.05 in}
x &\te{ if } j=1,  \\
(1-x)^{j-1} x & \te{ if } 2\leq j.
\end{array}
\right.
\end{align}
For the sets $\ga_n$ to form the optimal sets of $n$-means for all $1\leq n$, we must have
\begin{equation} \label{eq445} (n-1)\leq \frac 12 (n-1+Av[n, \infty))\leq n
\end{equation}
and 
\begin{equation} \label{eq446} 
V(P;  \set{n-1, Av[n,\infty)}) \leq V(P; \set{Av[n-1,n], Av[n+1, \infty)})
\end{equation}
for $2\leq n$. 
After some calculation, we see that there exists a real number $y$ for which the inequalities given by \eqref{eq445} and \eqref{eq446} are satisfied if $y\leq x<1$. The ten-digit rational approximation of such a $y$ is  $0.6666666667$. 
Hence, a set of probability vectors $(p_1, p_2, p_3, \cdots)$ for which the given sets $\ga_n$ form the optimal sets of $n$ means for $1\leq n $ is given by
\[\left \{\left(x, (1-x)x, (1-x)^2x, (1-x)^3x, (1-x)^4x, \cdots \right) : 0.6666666667\leq  x < 1\right\}.\]

\begin{exam} \label{ex1}
 Let $m=6$ in Case~1. Then, for $0<x<1$ we have
\begin{equation*} \label{eq31} p_1=x, \, p_2=(1-x) x, \, p_3=(1-x)^2 x, \, p_4=(1-x)^3 x, \, p_5=(1-x)^4 x, \te{ and } p_6=(1-x)^5.
\end{equation*}
After solving the inequalities given by \eqref{eq345}, we have $0.4812099363< x < 1$. Again, solving the inequality \eqref{eq346}, we have $0.6628057756\leq x<1$. Notice that $0.4812099363$ and $0.6628057756$ are the ten-digit rational approximations of two real numbers. Thus, the inequalities given by \eqref{eq345} and \eqref{eq346} are true if $0.6628057756\leq  x < 1$. Hence, a set of probability vectors $(p_1, p_2, \cdots, p_6)$ for which the given sets $\ga_n$ form the optimal sets of $n$ means for $1\leq n\leq 6$ is given by
\[\left \{\left(x, (1-x)x, (1-x)^2x, (1-x)^3x, (1-x)^4x, (1-x)^5\right) : 0.6628057756\leq  x < 1\right\}.\]
\end{exam}
\begin{exam} \label{ex2}
 Let $m=7$ in Case~1. Then, proceeding as Example~\ref{ex1},
 we see that \eqref{eq345} and \eqref{eq346} are true if $0.6654212000\leq x<1$. Hence, a set of probability vectors $(p_1, p_2, \cdots, p_7)$ for which the given sets $\ga_n$ form the optimal sets of $n$ means for $1\leq n\leq 7$ is given by
\[\left\{x, (1-x)x, (1-x)^2x, (1-x)^3x, (1-x)^4x, (1-x)^5x, (1-x)^6\right\}\]
where  $0.6654212000\leq x<1$.
\end{exam}

\section{Conclusion}
In this paper, we investigated the problem of optimal quantization for several discrete probability distributions, both finite and infinite. Starting with specific nonuniform discrete distributions supported on a finite set, we computed the optimal sets of $n$-means and the corresponding quantization errors for various values of 
$n$. We then extended our analysis to infinite discrete distributions, including those supported on the set of reciprocals of natural numbers and on the natural numbers themselves. For each distribution, we established explicit constructions for optimal sets and determined the associated quantization errors, sometimes for values of 
$n$ as large as 300.

Moreover, we addressed the inverse problem of recovering probability distributions from known optimal sets. Our study included identifying the conditions under which given sets form optimal quantizers and proposing conjectures supported by computational verification. These results contribute to the broader understanding of quantization in discrete settings, offering both theoretical insights and practical tools applicable in areas such as data compression and signal processing.

Future work may explore whether the conjectured optimal sets continue to hold beyond the computed range and further generalize the reverse problem to more complex classes of distributions.


\section*{Declaration}
							
							\noindent
\textbf{Authors' contributions:} Each author contributed equally to this manuscript. All authors have read and agreed to the published version of the manuscript.\\
\\
\noindent \tbf{Funding:}  This research received no external funding.\\
							
							\noindent
							\textbf{Data availability:} No data were used to support this study.\\
							\\
							\noindent
\textbf{Conflicts of interest.} The authors declare no conflict of interest.\\

\end{document}